\newcolumntype{L}[1]{>{\raggedright\let\newline\\\arraybackslash\hspace{0pt}}m{#1}}
\newcolumntype{C}[1]{>{\centering\let\newline\\\arraybackslash\hspace{0pt}}m{#1}}
\newcolumntype{R}[1]{>{\raggedleft\let\newline\\\arraybackslash\hspace{0pt}}m{#1}}
\newtheorem{theorem}{Theorem}
\numberwithin{theorem}{section}
\newtheorem{lemma}[theorem]{Lemma}
\newtheorem{corollary}[theorem]{Corollary}
\newtheorem{proposition}[theorem]{Proposition}
\newtheorem{subtheorem}{Theorem}
\numberwithin{subtheorem}{subsection}
\newtheorem{sublemma}[subtheorem]{Lemma}
\newtheorem{subproposition}[subtheorem]{Proposition}
\theoremstyle{definition}
\newtheorem{subdefinition}[subtheorem]{Definition}
\theoremstyle{remark}
\newtheorem{subremark}[subtheorem]{Remark}
\theoremstyle{theorem}
\newtheorem*{Theorem B}{Theorem B}
\newtheorem{theoremalpha}{Theorem}
\theoremstyle{definition}
\newtheorem{definition}[theorem]{Definition}
\theoremstyle{remark}
\newtheorem{remark}[theorem]{Remark}
\numberwithin{equation}{section}
\newcommand{\enl}[2]{{#2}^{\lbrack #1 \rbrack}}
\newcommand{\hns}[2]{{#2}^{\lbrack #1 \rbrack}}
\newcommand{\slhf}[2]{\mu_{#1}(#2)}
\newcommand{\slgf}[2]{\mu^{#1}(#2)}
\newcommand{\br}[1]{\lbrack #1 \rbrack}
\newcommand{\sing}[1]{\mathrm{Sing}( #1 )}
\newcommand{\sym}[2]{\mathrm{Sym}^{#1}(#2)}
\newcommand{\Oc}{\mathcal{O}}
\newcommand{\Lc}{\mathcal{L}}
\newcommand{\Qc}{\mathcal{Q}}
\newcommand{\Fc}{\mathcal{F}}
\newcommand{\Ec}{\mathcal{E}}
\newcommand{\Mcal}{\mathcal{M}}
\newcommand{\cc}{\mathbb{C}}
\newcommand{\zz}{\mathbb{Z}}
\newcommand{\qq}{\mathbb{Q}}
\newcommand{\snf}{\text{$\mathfrak{S}_n$}}
\newcommand{\pt}{\mathbb{P}^2}
\newcommand{\ra}{\rightarrow}
\newcommand{\hsn}{H_{S^n}}
\newcommand{\dfn}{:=}
\newcommand{\rk}[1]{\mathrm{rank}(#1)}
\newcommand{\enlo}{\enl{n}{\Oc_{\pt}(1)}}
\newcommand{\hnpt}{\hns{n}{\pt}}
\newcommand{\hr}[2]{\hyperref[#1]{#2}}
\newcommand{\hhom}{\mathcal{H}\mathrm{om}}
\newcommand{\Zcal}{\mathcal{Z}}
\newcommand{\Zcaln}{{\mathcal{Z}_n}}
\newcommand{\pv}[1]{\mathbb{P}(#1)}
\newcommand{\tnl}{\enl{n}{(T_S)}}
\newcommand{\dern}{\mathrm{Der}_{\cc}(\mathrm{-log}B_n)}
\begin{document}

\title{Geometry and Stability of Tautological Bundles on Hilbert schemes of points}

\author{David Stapleton}

\maketitle


\section*{Introduction}

The purpose of this paper is to explore the geometry of tautological bundles on Hilbert schemes of smooth surfaces and to establish the slope stability of these bundles.

Let $S$ be a smooth complex projective surface, and denote by $\hns{n}{S}$ the Hilbert scheme parametrizing length $n$ subschemes of $S$. This parameter space carries some natural tautological vector bundles: if $\Lc$ is a line bundle on $S$ then $\enl{n}{\Lc}$ is the rank $n$ vector bundle whose fiber at the point corresponding to a length $n$ subscheme $\xi \subset S$ is the vector space $H^0(S,\Lc \otimes \Oc_\xi)$. These tautological vector bundles have attracted a great deal of interest. Danila ~\cite{Danila} and Scala ~\cite{Scala} computed their cohomology. Ellingsrud and Str\o mme ~\cite{EStromme} showed the Chern classes of the bundles $\enl{n}{\Oc_{\mathbb{P}^2}}$, $\enl{n}{\Oc_{\mathbb{P}^2}(1)}$, and $\enl{n}{\Oc_{\mathbb{P}^2}(2)}$ generate the cohomology of $\hns{n}{\mathbb{P}^2}$. Nakajima gave an interpretation of the McKay correspondence by restricting the tautological bundles to the G-Hilbert scheme which is nicely exposited in ~\cite[$\S$4.3]{NakHilb}. Recently Okounkov ~\cite{Okounkov} formulated a conjecture about special generating functions associated to the tautological bundles.

Given the importance of the tautological bundles it is natural to ask whether they are stable. In ~\cite{Schl}, ~\cite{Wandel1}, and ~\cite{Wandel2} this question has been answered positively for Hilbert schemes of 2 points or 3 points on a K3 or abelian surface with Picard group restrictions. Our first result establishes the stability of these bundles for arbitrary $n$ and any surface.


\begin{theoremalpha}\label{A} If $\Lc$ is a nontrivial line bundle on $S$, then $\enl{n}{\Lc}$ is slope stable with respect to natural Chow divisors on $\hns{n}{S}$.
\end{theoremalpha}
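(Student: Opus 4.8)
The plan is to compute the slope with respect to the Chow polarization, to reduce stability to the behaviour of $\enl{n}{\Lc}$ along general curves, and then to recognize that behaviour as the pushforward of a line bundle under a finite cover with full symmetric monodromy.

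First I would pin down the polarization and the slope. Writing $h\colon \hns{n}{S}\to\sym{n}{S}$ for the Hilbert--Chow morphism and $H=h^*A$ for an ample class $A$ on $\sym{n}{S}$ coming from an ample divisor on $S$, I would use the standard description $\pic{\hns{n}{S}}\cong\pic{S}\oplus\zz\cdot\delta$ with $2\delta=[E]$ the exceptional divisor, together with the formula $c_1(\enl{n}{\Lc})=\mathcal{D}_{\Lc}-\delta$, where $\mathcal{D}_{\Lc}$ is the line bundle descending $\Lc^{\boxtimes n}$ from $\sym{n}{S}$. Since $H$ is pulled back from $\sym{n}{S}$ while $h(E)$ is the big diagonal of dimension $2n-2$, the projection formula gives $\delta\cdot H^{2n-1}=0$; the exceptional correction drops out and $\mu(\enl{n}{\Lc})=\tfrac1n\,\mathcal{D}_{\Lc}\cdot H^{2n-1}$ is a clean number computed on $\sym{n}{S}$. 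This is already where nontriviality enters at the level of intuition: for $\Lc=\Oc_S$ one has $\mathcal{D}_{\Lc}=\Oc$, so $\mu=0$, and the constant-function inclusion $\Oc\hookrightarrow\enl{n}{\Oc_S}$ shows the trivial case is only strictly semistable.

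Next I would reduce to curves. Because $H=h^*A$ is big and nef with $h$ birational, its top self-intersection is represented by genuine complete-intersection curves, and these compute $H$-slopes; moreover a general such $C$ is the pullback of a general high-degree complete-intersection curve in $\sym{n}{S}$, hence avoids the big diagonal and lies in the reduced locus $\hns{n}{S}_{\mathrm{red}}\cong(S^n\setminus\Delta)/\snf$. Pulling back the universal family gives a degree-$n$ finite cover $f\colon\tilde C\to C$ with an evaluation $ev\colon\tilde C\to S$, and unwinding the definition of the tautological bundle yields a canonical identification $\enl{n}{\Lc}|_C\cong f_*\bigl(ev^*\Lc\bigr)$ by flat base change. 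By a uniform-position argument the monodromy of this cover is the full symmetric group $\snf$ for general $C$. A destabilizing subsheaf of $\enl{n}{\Lc}$, once saturated and restricted to such a general $C$, would destabilize $\enl{n}{\Lc}|_C$; so I am reduced to the curve-theoretic statement that, for $\snf$-monodromy and nontrivial $\Lc$, the pushforward $f_*(ev^*\Lc)$ is a \emph{stable} bundle on $C$.

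The main obstacle is precisely this last step: controlling the saturated subsheaves of a pushforward. Here I would pass to the Galois closure $\hat C\to C$ with group $\snf$ and analyze $f_*(ev^*\Lc)$ through the permutation action, so that a subsheaf corresponds to an $\snf$-equivariant subobject of the collection of coordinate line bundles $\Lc_1,\dots,\Lc_n$ (the pullbacks of $\Lc$ under the $n$ point-maps $\hat C\to S$), the datum induced up from $\mathfrak{S}_{n-1}$. Since $\mathrm{Ind}_{\mathfrak{S}_{n-1}}^{\snf}\mathbf 1=\mathbf 1\oplus V_{\mathrm{std}}$, the only way to destabilize is through the trivial summand; and nontriviality of $\Lc$ is exactly what removes it, because for general $C$ the bundles $\Lc_i$ are genuinely distinct on $\hat C$ and admit no $\snf$-invariant relation, leaving only the irreducible standard piece. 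A slope estimate then forces every proper subsheaf to have strictly smaller slope, and descending this back to $\hns{n}{S}$ completes the argument. The delicate points I expect to fight with are making the ``distinctness of the $\Lc_i$'' precise for general $C$ and ruling out destabilizing subsheaves that are not pulled back from intermediate covers, i.e. genuinely establishing stability rather than mere semistability of the pushforward.
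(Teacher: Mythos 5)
Your proposal is correct in substance, but it runs the key mechanism in a genuinely different place than the paper does. The paper never restricts to curves in its main argument: it transports the whole problem to the cartesian power $S^n$, showing that the modification $(\enl{n}{\Lc})_{S^n}$ along the exceptional locus is $\snf$-equivariantly isomorphic to $\Lc^{\boxplus n} = \bigoplus_i q_i^*\Lc$ (Lemma 1.1), that slopes with respect to $H_n$ and $H_{S^n}$ agree up to the factor $n!$ because the diagonals have codimension $2$ (Lemma 1.2), and that $\Lc^{\boxplus n}$ admits no $\snf$-equivariant destabilizer (Lemma 1.3): a maximal-slope stable subsheaf either has slope strictly below $\slhf{H_{S^n}}{q_i^*\Lc}$ or \emph{is} one of the summands, and transitivity of $\snf$ on the summands finishes. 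Your Galois closure $\hat C$ is precisely $\sigma_n^{-1}(h_n(C)) \subset S^n$, your $\Lc_i$ are the restrictions of the $q_i^*\Lc$, and your dichotomy is the restriction of Lemma 1.3 to that curve, so the combinatorial heart is identical; the difference is that you descend to a general curve first. What your route buys: for line bundles no restriction theorem is ever needed, since each $\Lc_i$ is automatically stable on a curve, whereas the paper's Lemma 1.3 rests on Proposition 3.7, whose proof invokes Mehta--Ramanathan. What it costs: (i) full $\snf$-monodromy, i.e.\ connectedness of $\hat C$, which is provable since $\hat C$ is a complete intersection of ample divisors in $S^n$ (a Koszul complex plus Kodaira vanishing gives $h^0(\Oc_{\hat C})=1$); and (ii) the distinctness of the $\Lc_i$, which you correctly identify as the crux. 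Be warned that this is not a matter of ``general $C$'' within a fixed family: the $\Lc_i$ all have equal degree, so you must show $(q_i^*\Lc \otimes q_j^*\Lc^{\vee})|_{\hat C} \not\cong \Oc_{\hat C}$, which on $S^n$ is formal (restrict to fibers of the projections) but on $\hat C$ requires the curve to be cut out by divisors sufficiently positive \emph{relative to} $\Lc$, again via Koszul plus Kodaira--Serre vanishing forcing $h^0$ of the restriction to vanish. If this distinctness fails, the argument genuinely collapses, as it must: for $\Lc = \Oc_S$ all $\Lc_i$ are trivial and the invariant diagonal copy gives $\Oc_C \subset f_*\Oc_{\tilde C}$ of equal slope, so only strict semistability holds. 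Also, your representation-theoretic framing $\mathrm{Ind}_{\mathfrak{S}_{n-1}}^{\snf}\mathbf{1} = \mathbf{1}\oplus V_{\mathrm{std}}$ literally applies only to $f_*\Oc_{\tilde C}$; for nontrivial $\Lc$ the correct statement is the one you reach afterwards, namely Hom-vanishing between the pairwise non-isomorphic stable line bundles $\Lc_i$ combined with transitivity of the $\snf$-action. Finally, note that the paper proves the stronger statement that $\enl{n}{\Ec}$ is stable for any stable bundle $\Ec \not\cong \Oc_S$; at that level of generality your curve-level argument would need Mehta--Ramanathan anyway, which erases its main advantage and is exactly where the paper chooses to use it.
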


\noindent More precisely, an ample divisor on $S$ determines a natural ample divisor on $\sym{n}{S}$, and the pullback via the Hilbert-Chow morphism gives one such natural Chow divisor on $\hns{n}{S}$, which is not ample but is big and semiample. More generally, we prove that if $\Ec \not\cong \Oc_S$ is any slope stable vector bundle on $S$ with respect to some ample divisor then $\enl{n}{\Ec}$ is slope stable with respect to the corresponding Chow divisor. Although Theorem A only gives stability with respect to a strictly big and nef divisor, we are able to deduce stability with respect to nearby ample divisors via a perturbation argument on the nef cone.

If $S$ is any smooth surface, there is a divisor $B_n$ in $\hns{n}{S}$ which consists of nonreduced subschemes. The pair $(\hns{n}{S},B_n)$ gives a natural closure of the space of $n$ distinct points in $S$. The vector fields on $\hns{n}{S}$ tangent to $B_n$ form the sheaf of logarithmic vector fields $\dern$. Our second result says the sheaf $\dern$ is naturally isomorphic to the tautological bundle associated to the tangent bundle on $S$.


\begin{theoremalpha}\label{B} For any smooth surface $S$ there exists a natural injection:
\begin{center}
$\displaystyle \alpha_n : \tnl \ra T_{\hns{n}{S}}$,
\end{center}
\noindent and $\alpha_n$ induces an isomorphism between $\tnl$ and $\dern$.
\end{theoremalpha}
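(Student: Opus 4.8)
The plan is to build $\alpha_n$ out of the universal family and then identify its image. Write $\Zcaln \subset \hns{n}{S}\times S$ for the universal subscheme, with projections $p = \pi_1|_{\Zcaln}$ (finite flat of degree $n$) and $q = \pi_2|_{\Zcaln}$. I would use two standard identifications: $\tnl = p_*(q^*T_S)$ by definition of the tautological bundle, and $T_{\hns{n}{S}} = \pi_{1*}\hhom(\mathcal{I}_{\Zcaln},\Oc_{\Zcaln})$ from the deformation theory of the Hilbert scheme. Since any $\Oc$-linear $\phi:\mathcal{I}_{\Zcaln}\to\Oc_{\Zcaln}$ satisfies $\phi(ab)=a\phi(b)=0$ for $a,b\in\mathcal{I}_{\Zcaln}$, it kills $\mathcal{I}_{\Zcaln}^2$, so the target is the relative normal sheaf $p_*\hhom(\mathcal{I}_{\Zcaln}/\mathcal{I}_{\Zcaln}^2,\Oc_{\Zcaln})$. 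I then take the $\Oc_{\Zcaln}$-dual of the relative conormal map
\[
\delta:\ \mathcal{I}_{\Zcaln}/\mathcal{I}_{\Zcaln}^2 \longrightarrow q^*\Omega_S,\qquad \bar f\mapsto d_S f|_{\Zcaln},
\]
obtaining $\delta^\vee: q^*T_S\to \hhom(\mathcal{I}_{\Zcaln}/\mathcal{I}_{\Zcaln}^2,\Oc_{\Zcaln})$, and set $\alpha_n:=p_*(\delta^\vee)$. Fiberwise this is exactly ``restrict a derivation $\Oc_S\to\Oc_\xi$ to $\mathcal{I}_\xi$,'' whose image lands in $\mathrm{Hom}_{\Oc_S}(\mathcal{I}_\xi,\Oc_\xi)=T_{[\xi]}\hns{n}{S}$ by the Leibniz computation above, so $\alpha_n$ is the desired natural map.

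Both bundles have rank $2n$. Over the dense open locus of reduced subschemes $\alpha_n$ is the evident identification $\bigoplus_i T_{x_i}S \to \bigoplus_i T_{x_i}S$, an isomorphism; since $\tnl$ is torsion-free on the integral scheme $\hns{n}{S}$, a bundle map that is generically an isomorphism is injective. Hence $\alpha_n$ is injective and $\mathrm{coker}(\alpha_n)$ is supported on $B_n$. To see that $\mathrm{im}(\alpha_n)\subseteq\dern$, I would work locally: cover $\hns{n}{S}$ by opens $\hns{n}{U}$ where $U=\bigsqcup U_i$ is a disjoint union of affines around the support points of a subscheme. On such $U$ the bundle $T_U$ is globally generated, so $H^0(U,T_U)\otimes\Oc \twoheadrightarrow \tnl$ and $\tnl$ is generated by the tautological vector fields $\alpha_n(\bar v)$ coming from honest vector fields $v$ on $U$. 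The flow of $v$ is a one-parameter group of automorphisms of $U$, inducing automorphisms $\phi_t^{[n]}$ of $\hns{n}{U}$ that preserve the non-reduced locus, hence $B_n$; differentiating at $t=0$ shows $\alpha_n(\bar v)$ is tangent to $B_n$. As $\dern$ is an $\Oc$-submodule and these sections generate, the inclusion follows.

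It remains to promote this inclusion to an equality. Here I would use that $\mathrm{im}(\alpha_n)\cong\tnl$ is locally free, while $\dern$ is saturated in $T_{\hns{n}{S}}$ (the quotient embeds in the torsion-free normal sheaf $\Oc_{B_n}(B_n)$) and hence reflexive. For reflexive $F\subseteq G$ with $G/F$ supported in codimension $\ge 2$ one has $\mathcal{E}xt^{\le 1}(G/F,\Oc)=0$, so applying $\hhom(-,\Oc)$ to $0\to F\to G\to G/F\to 0$ gives $G^\vee\xrightarrow{\sim}F^\vee$, and a second dualization forces $F=G$. Thus it suffices to prove $\mathrm{im}(\alpha_n)=\dern$ at the generic point of $B_n$. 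There the local structure of $\hns{n}{S}$ is a product of $\hns{2}{S}$ (at the unique length-two point) with smooth factors (the remaining reduced points), and $\alpha_n$, $B_n$, and $\dern$ all decompose compatibly with this product, reducing the claim to the case $n=2$.

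For $n=2$ I would compute in the blow-up model $\hns{2}{S}=\mathrm{Bl}_\Delta(\sym{2}{S})$, in which $B_2$ is the generically smooth exceptional divisor. At the generic point $\eta$ of $B_2$ the local ring is a DVR; since $\alpha_2$ is injective between free modules of equal rank, $\mathrm{length}(\mathrm{coker}\,\alpha_2)_\eta=\mathrm{ord}_\eta(\det\alpha_2)$, while $\dern_\eta$ has colength one in $(T_{\hns{2}{S}})_\eta$ because $B_2$ is smooth at $\eta$. The crux is to check, by an explicit computation of the conormal map $\delta$ and of $T_{\hns{2}{S}}$ near a double point with a tangent direction, that $\det\alpha_2$ vanishes to order exactly one along $B_2$; the two colengths then agree, and combined with $\mathrm{im}(\alpha_2)\subseteq\dern$ this yields equality at $\eta$. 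This boundary analysis — verifying that the tautological map degenerates transversally, with corank exactly one along $B_n$ — is the main obstacle, as it is the only step requiring genuine local knowledge of the Hilbert scheme rather than formal manipulation; everything preceding it, as well as the final reflexive-sheaf comparison, is routine.
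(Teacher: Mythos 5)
Your construction of $\alpha_n$ is the same as the paper's (pushing forward the dual of the relative conormal map of $\Zcaln$ is exactly the paper's pushforward of the normal sequence), and your injectivity argument is the standard one. Your proof that $\mathrm{im}(\alpha_n)\subseteq\dern$ is, however, genuinely different from the paper's: you generate $\tnl$ over $\hns{n}{U}$, $U$ an affine neighborhood of the support, by tautological vector fields and use that the flow of a vector field on $U$ induces automorphisms of $\hns{n}{U}$ preserving the nonreduced locus; the paper instead proves a lemma on branched coverings (a vector field downstairs whose pullback lifts to the covering space is logarithmic along the branch locus) and applies it to $p_2\colon\Zcaln\to\hns{n}{S}$ over the open locus of subschemes with at least $n-1$ distinct points, where $\Zcaln$ is smooth. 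Both routes are valid; yours is arguably more geometric, the paper's has the advantage of working uniformly from the identity $\beta\circ(p_2^*\alpha_n\oplus-\phi)=0$ without invoking analytic flows. Your reflexivity formalism and the reduction of the equality $\mathrm{im}(\alpha_n)=\dern$ to the generic point of $B_n$, and then to $n=2$ by the local product structure, are also sound.

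The genuine gap is the step you yourself flag as ``the crux'': you never prove that $\det\alpha_2$ vanishes to order exactly one along $B_2$. Without it the argument proves nothing at the generic point of $B_n$ --- containment plus equal rank is consistent with the cokernel of $\alpha_n$ having length $\ge 2$ there, in which case $\mathrm{im}(\alpha_n)$ would be a proper submodule of $\dern$ and the theorem would fail. This is precisely where the paper substitutes a Chern class identity for any local computation: $\tnl$ and $\dern$ have the same first Chern class, so the inclusion $\tnl\subseteq\dern$, being an isomorphism off $B_n$, has cokernel supported in codimension $\ge 2$, and reflexivity finishes. That identity follows from standard facts: $c_1(\enl{n}{\Ec})$ is the Chow divisor induced by $c_1(\Ec)$ minus $\tfrac{r}{2}[B_n]$, so for the rank-two bundle $T_S$ one gets the induced divisor minus $[B_n]$; the Hilbert--Chow morphism is crepant, so $c_1(T_{\hns{n}{S}})$ is the Chow divisor induced by $c_1(T_S)$; and $c_1(\dern)=c_1(T_{\hns{n}{S}})-[B_n]$ since $B_n$ is reduced. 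Together these say exactly that $\det\alpha_n$ vanishes to order one along $B_n$. So your ``main obstacle'' can be discharged with no local analysis at all by citing these formulas; alternatively you must actually carry out the double-point computation your proposal only describes. As written, the proof is incomplete at its decisive step.
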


\noindent The analogous statement also holds for smooth curves. In general the sheaves $\dern$ are only guaranteed to be reflexive as $B_n$ is not simple normal crossing. However, \hr{B}{Theorem B} shows the $\dern$ is locally free, that is $B_n$ is a \textit{free divisor}. Buchweitz and Mond were already aware that $B_n$ is a free divisor as is indicated in the introduction of \cite{Buch}.

Finally, we explore the geometry of the tautological bundles when the surface is the projective plane. We prove the tautological bundles on $\hnpt$ are rich enough to capture all semistable rank $n$ bundles on curves.


\begin{theoremalpha}\label{C} If $C$ is a smooth projective curve and $\Ec$ is a semistable rank $n$ vector bundle on $C$ with sufficiently positive degree, then there exists an embedding $C \ra \hns{n}{\mathbb{P}^2}$ such that $\enl{n}{\Oc_{\mathbb{P}^2}(1)}|_C \cong  \Ec$.
\end{theoremalpha}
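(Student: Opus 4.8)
The plan is to reduce the statement to the construction of a suitable finite cover of $C$. Giving a morphism $\phi : C \to \hnpt$ is equivalent to giving a closed subscheme $\Xi \subset C \times \pt$ that is finite and flat of degree $n$ over $C$. Writing $\pi : \Xi \to C$ and $g : \Xi \to \pt$ for the two projections and setting $\mathcal{N} := g^{*}\Oc_{\pt}(1)$, the projection formula together with cohomology and base change identifies the restriction of the tautological bundle as $\phi^{*}\enlo \cong \pi_{*}\mathcal{N}$. Hence it is enough to produce a smooth projective curve $\widetilde{C}$ with a finite flat degree $n$ morphism $\pi : \widetilde{C} \to C$, a line bundle $\mathcal{N}$ on $\widetilde{C}$, and a morphism $g : \widetilde{C} \to \pt$ with $g^{*}\Oc_{\pt}(1) \cong \mathcal{N}$, subject to three requirements: (i) $\pi_{*}\mathcal{N} \cong \Ec$; (ii) the combined map $(\pi,g) : \widetilde{C} \to C \times \pt$ is a closed immersion, so that its image $\Xi$ is a genuine family of length $n$ subschemes and defines $\phi$; and (iii) the induced $\phi$ is an embedding. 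Condition (i) makes the desired isomorphism $\phi^{*}\enlo \cong \Ec$ automatic.

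For the first and most substantial step I would realize $\Ec$ as a direct image from a smooth degree $n$ cover by means of a spectral-curve construction. Fix an auxiliary line bundle $L$ on $C$ of large degree, so that $H^{0}(C,\mathrm{End}(\Ec)\otimes L)$ is large. For a general twisted endomorphism $\theta : \Ec \to \Ec \otimes L$ the associated spectral curve $\widetilde{C}$, cut out in the total space of $L$ by the vanishing of the characteristic polynomial of $\theta$, is smooth, irreducible, and finite flat of degree $n$ over $C$; this follows from a Bertini argument once $\deg L$ is sufficiently positive, since the linear system of characteristic polynomials is then base-point free. By the spectral correspondence the cokernel line bundle $\mathcal{N}$ on $\widetilde{C}$ satisfies $\pi_{*}\mathcal{N} \cong \Ec$, which is exactly condition (i); note that no endomorphisms of $\Ec$ are needed here, as we are free to twist by $L$. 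By Riemann--Roch on the cover, $\deg \mathcal{N}$ grows with $\deg \Ec$, so if $\Ec$ has sufficiently positive degree then $\mathcal{N}$ is very ample on $\widetilde{C}$ and $h^{0}(\widetilde{C},\mathcal{N}) = h^{0}(C,\Ec)$ is large.

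Next I would construct $g$. Since $H^{0}(\widetilde{C},\mathcal{N}) \cong H^{0}(C,\Ec)$, choosing a three-dimensional subspace $V \subset H^{0}(C,\Ec)$ is the same as choosing three sections of $\mathcal{N}$, and a base-point-free such $V$ defines a morphism $g : \widetilde{C} \to \pt$ with $g^{*}\Oc_{\pt}(1) \cong \mathcal{N}$, so condition (i) is preserved for free. Because $\mathcal{N}$ is very ample, $\widetilde{C}$ embeds in $\mathbb{P}^{N}$ with $N = h^{0}(\mathcal{N})-1$ large, and a general $V$ realizes $g$ as a general linear projection of this embedded curve to $\pt$. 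Such a projection is automatically an immersion, and its only failures of injectivity are finitely many nodes, coming from secant lines of $\widetilde{C}$ meeting the center of projection. For $(\pi,g)$ to be a closed immersion one only needs that no such node identifies two points lying in the same fiber of $\pi$. The secant lines joining pairs of points in a common $\pi$-fiber sweep out a surface in $\mathbb{P}^{N}$, and a dimension count shows that a general center of projection, of dimension $N-3$, avoids this surface; hence for a general $V$ the map $(\pi,g)$ is injective, and, being an immersion, is a closed immersion, giving (ii). The same genericity, together with the fact that the family varies nontrivially with $c$, shows that $\phi$ separates points and tangent vectors of $C$, which is (iii).

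The main obstacle is to control both projections of $\widetilde{C}$ at once: the spectral construction produces the cover and the isomorphism $\pi_{*}\mathcal{N} \cong \Ec$, but one must then map $\widetilde{C}$ into $\pt$ through sections of $\mathcal{N}$ in such a way that the combined map remains a closed immersion, and this is the delicate point where the separation and dimension counts above do the work. The hypotheses are used to make everything uniform: semistability ensures that a single condition of large degree forces all quotients of $\Ec$ to have large slope, so that $\Ec$ and $\mathcal{N}$ are globally generated and very ample and the relevant higher cohomology vanishes, while the sufficiently positive degree guarantees that $\mathcal{N}$ carries enough sections to perform the generic projection into $\pt$. I expect the verification that a general plane projection does not collapse two points lying in a single $\pi$-fiber to be the technical heart of the proof.
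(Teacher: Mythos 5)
Your proposal is essentially the paper's own proof: both arguments realize $\Ec$ as $\pi_*\mathcal{N}$ for a line bundle $\mathcal{N}$ on a smooth BNR spectral curve attached to a general $L$-twisted endomorphism of the fixed bundle $\Ec$, observe that $\deg \mathcal{N}$ grows with $\deg \Ec$ while the genus of the spectral curve does not (so $\mathcal{N}$ becomes very ample), and then use three general sections of $\mathcal{N}$ to map the spectral curve to $\pt$ so that the induced maps to $C \times \pt$ and $C \ra \hnpt$ are embeddings, with the generic-projection dimension counts you supply playing the role of the paper's unproved assertion at that step. The one repair needed is your justification of smoothness of the spectral curve: the characteristic polynomials of $L$-twisted endomorphisms of the \emph{fixed} bundle $\Ec$ do not form a linear system (the coefficients depend polynomially, not linearly, on $\theta$, and smoothness of a general member of the full linear system of all spectral curves would not show that $\Ec$ itself is a pushforward from a smooth one), so Bertini does not apply as stated; the paper instead uses that semistability makes $\Ec \otimes \Ec^{\vee} \otimes L$ globally generated once $\deg L \ge 2g$, so that a general section, viewed as a map from $C$ into the total space of that bundle, meets the discriminant locus transversally and avoids its higher-degeneracy strata, which is what forces the resulting spectral curve to be smooth and connected.
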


The proof of \hr{A}{Theorem A} follows the approach taken by Mistretta ~\cite{Mistretta} who studies the stability of tautological bundles on the symmetric powers of a curve. The idea is to examine the tautological vector bundles on the cartesian power $S^n$ and show there are no $\snf$-equivariant destabilizing subsheaves. This strategy is more effective for surfaces because the diagonals in $S^n$ have codimension 2. The map in \hr{B}{Theorem B} arises from pushing forward the normal sequence of the universal family. The image of the restriction of the map to a point $\br{\xi} \in \hns{n}{S}$ can be thought of as deformations of $\xi$ coming from vector fields on $S$. The proof of \hr{C}{Theorem C} is constructive, using the spectral curves of \cite{BNR}.

In \hr{Sec1}{Section 1} we give the proof of \hr{A}{Theorem A}. In \hr{Sec2}{Section 2} we explore the geometry of the tautological bundles. We start by proving \hr{C}{Theorem C}. We proceed by showing that for Hilbert schemes of 2 points in the plane the tautological bundles are relative analogues of the Steiner bundles in the plane. Next we prove \hr{B}{Theorem B}. Finally, in \hr{Sec3}{Section 3} we give the perturbation argument, deducing the tautological bundles are stable with respect to ample divisors.

Throughout we work over the complex numbers. For any divisor class $H \in N^1(X)$. We define the \textit{slope of $\Ec$ with respect to $H$} to be the rational number:

\begin{center}
$\displaystyle \slhf{H}{\Ec} \dfn \dfrac{c_1(\Ec) \cdot H^{d-1}}{ \rk{\Ec}}$.
\end{center}

\noindent We say $\Ec$ is \textit{slope (semi)stable with respect to H} if for all subsheaves $\Fc \subset \Ec$ of intermediate rank:

\begin{center}
$\slhf{H}{\Fc} \underset{(\le)}{<} \slhf{H}{\Ec}$.
\end{center}

I am grateful to my advisor Robert Lazarsfeld who suggested the project and directed me in productive lines of thought. I am also thankful for conversations and correspondences with Lawrence Ein, Roman Gayduk, Daniel Greb, Giulia Sacc\`a, Ian Shipman, Brooke Ullery, Dingxin Zhang, and Xin Zhang. This paper is a substantial revision of a previous preprint. I would finally like to thank the referee of the previous paper for a thorough review and helpful suggestions.


\section{Stability of Tautological Bundles}\label{Sec1}

In this section we prove that the tautological bundle of a stable vector bundle $\Ec$ is stable with respect to natural Chow divisors on $\hns{n}{S}$. Thus we deduce \hr{A}{Theorem A} when $\Ec$ is a nontrivial line bundle. We start by defining the essential objects in the study of Hilbert schemes of points on surfaces.

Let $S$ be a smooth complex projective surface. We write $\hns{n}{S}$ for the Hilbert scheme of length $n$ subschemes of $S$. We denote by $\mathcal{Z}_n$ the universal family of $\hns{n}{S}$ with projections:

\begin{center}
\begin{tikzpicture}
  \node (Zn) {$S \times \hns{n}{S} \supset \mathcal{Z}_n$};
  \node (S) [right] at (3,0) {$S$.};
  \node (HnS) [below] at (.95,-1) {$\hns{n}{S}$};
  \node (p1) at (2.1,.2) {$p_1$};
  \node (p2) at (.75,-.65) {$p_2$};
  \draw[->] (.95,-.3) -- (.95,-1.05);
  \draw[->] (1.2,0) -- (3,0);
\end{tikzpicture}
\end{center}

\noindent For a fixed vector bundle $\Ec$ on $S$ of rank $r$ we define

\begin{center}
$\enl{n}{\Ec} \dfn {p_2}_*({p_1}^*\Ec)$.
\end{center}

\noindent It is the \textit{tautological vector bundle associated to $\Ec$} and has rank $rn$. The fiber of $\enl{n}{\Ec}$ at a point $\br{\xi} \in \hns{n}{S}$ can be naturally identified with the vector space $H^0(S,\Ec|_\xi)$.

The symmetric group on $n$ elements $\snf$ naturally acts on the cartesian product $S^n$, and we write $\sigma_n$ for the quotient map:

\begin{center}
$\sigma_n:S^n \ra S^n / \snf =: \sym{n}{S}$.
\end{center}

\noindent There is also a Hilbert-Chow morphism:

\begin{center}
$h_n : \hns{n}{S} \ra \sym{n}{S}$
\end{center}

\noindent which is a semismall map ~\cite[Definition 2.1.1]{dCM1}.

We wish to view $\enl{n}{\Ec}$ as an $\snf$-equivariant sheaf on $S^n$. Recall that if $G$ is a finite group that acts on a scheme $X$, and if $\Fc$ is a coherent sheaf on $X$ then a \textit{$G$-equivariant structure on $\Fc$} is given by a choice of isomorphisms:

\begin{center}
$\phi_g : \Fc \ra g^* \Fc$
\end{center}

\noindent for all $g \in G$ satisfying the compatibility condition $h^* (\phi_g) \circ \phi_h = \phi_{gh}$. Following Danila ~\cite{Danila} and Scala ~\cite{Scala} we study the tautological bundles on $\hns{n}{S}$ by working with $\snf$-equivariant sheaves on $S^n$. For our purposes it is enough to study $\enl{n}{\Ec}$ equivariantly on the open subset of distinct points in $\hns{n}{S}$.

We write $\sym{n}{S}_{\circ}$ for the open subset of $\sym{n}{S}$ of distinct points. Likewise given a map $f : X \ra \sym{n}{S}$ we write $X_{\circ}$ for $f^{-1}(\sym{n}{S}_{\circ})$. By abuse of notation given another map $g: X \ra Y$ with domain $X$ we define $g_{\circ} \dfn g|_{X_{\circ}}$ and given a coherent sheaf $\Fc$ on $X$ we define $\Fc_{\circ} \dfn \Fc|_{X_{\circ}}$. The map $h_{n,\circ} : \hns{n}{S}_{\circ} \ra \sym{n}{S}_{\circ}$ is an isomorphism. We define
\begin{center}
$\overline{\sigma}_{n,\circ} \dfn h_{n,\circ}^{-1} \circ \sigma_{n,\circ}:S^n_{\circ} \ra \hns{n}{S}_{\circ}$.
\end{center}
Given a torsion-free coherent sheaf $\Fc$ on $\hns{n}{S}$ we define a torsion-free coherent sheaf on $S^n$ by

\begin{center}
$(\Fc)_{S^n} \dfn j_*(\overline{\sigma}_{n,\circ}^*(\Fc_{\circ}))$
\end{center}

\noindent where $j$ is the inclusion $j:S^n_{\circ} \ra S^n$. The sheaf $(\Fc)_{S^n}$ can be thought of as a modification of $\Fc$ along the exceptional divisor of $h_n$. 

The pullback $\overline{\sigma}_{n,\circ}^* (-)$ is left exact as the map $\overline{\sigma}_{n,\circ}$ is \'etale; thus the functor $(-)_{S^n}$ is left exact. If $\Fc$ is reflexive, the normality of $S^n$ implies the natural $\snf$-equivariant structure on the reflexive sheaf $\overline{\sigma}_{n,\circ}^*(\Fc_{\circ})$ pushes forward uniquely to an $\snf$-equivariant structure on $(\Fc)_{S^n}$.

Let $q_i$ denote the projection from $S^n$ onto the $i$th factor. Given a vector bundle $\Ec$ on $S$ there is an $\snf$-equivariant vector bundle on $S^n$ defined by

\begin{center}
$\displaystyle \Ec^{\boxplus n} \dfn \overset{n}{\underset{i=1}\bigoplus} q_i^*(\Ec)$.
\end{center}

\noindent We have given two natural $\snf$-equivariant sheaves on $S^n$ associated to $\Ec$. In fact they are equivalent.


\begin{lemma}\label{1.1} Given a vector bundle $\Ec$ on $S$ there is an isomorphism:
\begin{center}
$(\enl{n}{\Ec})_{S^n} \cong \Ec^{\boxplus n}$
\end{center}
\noindent of $\snf$-equivariant vector bundles on $S^n$.
\end{lemma}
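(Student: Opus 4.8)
The plan is to prove the isomorphism first over the open locus $S^n_{\circ}$ of distinct points, where both sheaves admit a completely transparent description, and then to propagate it across the big diagonal by a reflexivity argument that crucially exploits the fact that, for a surface, the diagonal has codimension two in $S^n$. Since $\Ec^{\boxplus n}$ is a vector bundle and $(\enl{n}{\Ec})_{S^n}$ is by construction $j_*$ of a vector bundle on $S^n_{\circ}$, it suffices to produce a canonical $\snf$-equivariant isomorphism on $S^n_{\circ}$ and then check it extends.

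First I would compute $\overline{\sigma}_{n,\circ}^*((\enl{n}{\Ec})_{\circ})$ directly. Pulling the universal family $\Zcaln$ back along $\mathrm{id}_S \times \overline{\sigma}_{n,\circ}$ yields a subscheme of $S \times S^n_{\circ}$ which, because we are over distinct points, decomposes as the disjoint union $\coprod_{i=1}^n \Gamma_i$ of the graphs $\Gamma_i$ of the projections $q_i \colon S^n_{\circ} \ra S$. As $\overline{\sigma}_{n,\circ}$ is étale (hence flat), flat base change identifies $\overline{\sigma}_{n,\circ}^*((\enl{n}{\Ec})_{\circ})$ with the pushforward to $S^n_{\circ}$ of $p_1^*\Ec$ restricted to this pulled-back family. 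The graph decomposition then splits that pushforward as a direct sum over $i$, and since $\Gamma_i$ maps isomorphically to $S^n_{\circ}$ with $p_1|_{\Gamma_i} = q_i$, the $i$th summand is exactly $q_i^*\Ec|_{\circ}$. This produces a canonical isomorphism $\overline{\sigma}_{n,\circ}^*((\enl{n}{\Ec})_{\circ}) \cong \Ec^{\boxplus n}|_{\circ}$ over $S^n_{\circ}$.

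Next I would verify this isomorphism is $\snf$-equivariant. The group $\snf$ permutes the factors of $S^n$, hence permutes the graphs $\Gamma_i$ and, correspondingly, the summands $q_i^*\Ec$; on the tautological side it acts through the natural equivariant structure on $\overline{\sigma}_{n,\circ}^*((\enl{n}{\Ec})_{\circ})$ coming from relabeling the $n$ distinct points. Because the graph decomposition is permuted compatibly with both actions, the isomorphism intertwines them. Finally, applying $j_*$ to this isomorphism identifies $(\enl{n}{\Ec})_{S^n} = j_*(\overline{\sigma}_{n,\circ}^*((\enl{n}{\Ec})_{\circ}))$ with $j_*(\Ec^{\boxplus n}|_{\circ})$. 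As $S$ is a surface, the big diagonal $S^n \setminus S^n_{\circ}$ has codimension two in the smooth variety $S^n$, and $\Ec^{\boxplus n}$ is a vector bundle, hence reflexive; by normality of $S^n$ this forces $j_*(\Ec^{\boxplus n}|_{\circ}) \cong \Ec^{\boxplus n}$. The unique extension of equivariant structures across the codimension-two locus (as already recorded for reflexive sheaves in the excerpt) upgrades the resulting isomorphism to one of $\snf$-equivariant vector bundles.

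The step I expect to require the most care is the base-change-and-graph-decomposition argument, since one must ensure the resulting identification is genuinely canonical and manifestly compatible with the permutation action, rather than an abstract isomorphism of the two sides. The reflexive extension across codimension two is where being on a surface rather than a curve is decisive: on a curve the diagonal is a divisor, and $j_*$ would generally introduce a nontrivial modification, which is precisely why the strategy is cleaner in the surface case.
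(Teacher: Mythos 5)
Your proposal is correct and follows essentially the same route as the paper's proof: both identify the pullback of the universal family over the locus of distinct points with the disjoint union of graphs $\coprod_{i=1}^n \Gamma_i$, apply flat (\'etale) base change to the resulting fiber square to split $\overline{\sigma}_{n,\circ}^*\big((\enl{n}{\Ec})_{\circ}\big)$ equivariantly as $\bigoplus_i q_i^*\Ec|_{\circ}$, and then extend across the codimension-two big diagonal via $j_*$. The only cosmetic difference is that you spell out the step $j_*\big(\Ec^{\boxplus n}|_{\circ}\big) \cong \Ec^{\boxplus n}$ (reflexivity of a vector bundle on the normal variety $S^n$), which the paper leaves implicit in its closing sentence.
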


\begin{proof} Consider the fiber square:

\begin{center}
\begin{tikzpicture}
  \node (Fib) at (2,0) {$\displaystyle \mathcal{Z}_{n,\circ} \underset{\text{\tiny ${\hns{n}{S}_{\circ}}$}}{\times} S^n_{\circ}$};
  \node (F) at (.45,0) {$F \dfn$};
  \node (Sn) at (5,0) {$S^n_{\circ}$};
  \node (Zn) at (2,-2) {$\displaystyle \mathcal{Z}_{n,\circ}$};
  \node (Hn) at (5,-2) {$\displaystyle \hns{n}{S}_{\circ}$};
  \draw[->] (Fib) to node[above] {$p'_{2,o}$} (Sn);
  \draw[->] (Fib) to node[left] {$\overline{\sigma}'_{n,\circ}$} (Zn);
  \draw[->] (Zn) to node[above] {$p_{2,o}$} (Hn);
  \draw[->] (Sn) to node[right] {$\overline{\sigma}_{n,\circ}$} (Hn);
\end{tikzpicture}.
\end{center}

\noindent Every map in the fiber square is an \'etale map between $\snf$-schemes (the $\snf$-action on $\mathcal{Z}_{n,\circ}$ and $\hns{n}{S}_{\circ}$ is trivial). We write $\Gamma_i$ for the subscheme of $S^n_{\circ} \times S$ that is the graph of the map $q_{i,o} : S^n_{\circ} \ra S$. The scheme $F$ is equal to the disjoint union $\coprod \Gamma_i$ and is a subscheme of $S^n_{\circ} \times S$. The restriction $p_{1,\circ} \circ \overline{\sigma}'_{n,\circ}|_{\Gamma_i}$ is the projection $\Gamma_i \ra S$. So there is an equivariant isomorphism ${p'_{2,o}}_*({\overline{\sigma}'_{n,\circ}}^*({p_{1,\circ}}^*(\Ec))) \cong \Ec^{\boxplus n}_{\circ}$.

As the fiber square is made of flat proper $\snf$-maps there is a natural $\snf$-equivariant isomorphism:

\begin{center}
${p'_{2,o}}_*({\overline{\sigma}'_{n,\circ}}^*({p_{1,\circ}}^*(\Ec))) \cong {\overline{\sigma}_{n,\circ}}^*({p_{2,o}}_*({p_{1,\circ}}^*(\Ec)))$.
\end{center}

\noindent The latter sheaf is $(\enl{n}{\Ec})_{S^n,\circ}$. Finally, any isomorphism between vector bundles on $S^n_{\circ}$ uniquely extends to an isomorphism between their pushforwards along $j$. Therefore there is a natural $\snf$-equivariant isomorphism $(\enl{n}{\Ec})_{S^n} \cong \Ec^{\boxplus n}$.
\end{proof}

Given an ample divisor $H$ on $S$ there is a natural $\snf$-invariant ample divisor on $S^n$ defined as:

\begin{center}
$\hsn:=\overset{n}{\underset{i=1}\sum} q_i^*(H)$.
\end{center}

\noindent Fogarty ~\cite[Lemma 6.1]{Fogarty} shows every divisor $\hsn$ descends to an ample Cartier divisor on $\sym{n}{S}$. Pulling back this Cartier divisor along the Hilbert-Chow morphism gives a big and nef divisor on $\hns{n}{S}$ which we denote by $H_n$. If $H$ is effective then $H_n$ can be realized set-theoretically as

\begin{center}
$H_n = \{ \xi \in \hns{n}{S} \text{ }|\text{ } \xi \cap \mathrm{Supp}(H) \ne \emptyset \}$.
\end{center}


\begin{lemma}\label{1.2} If $\Fc$ is a torsion-free sheaf on $\hns{n}{S}$ then

\begin{center}
$\displaystyle (n!)\int\limits_{\hns{n}{S}} c_1(\Fc) \cdot (H_n)^{2n-1} = \int\limits_{S^n}c_1((\Fc)_{S^n}) \cdot (H_{S^n})^{2n-1}$.
\end{center}
\end{lemma}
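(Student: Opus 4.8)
The plan is to transport both intersection numbers onto the symmetric product $\sym{n}{S}$, where the two divisors in question become pullbacks of a single ample class, and then to compare the two relevant first Chern classes there. By Fogarty's result $\hsn$ descends to an ample Cartier divisor $D$ on $\sym{n}{S}$, so that $H_n = h_n^* D$ and $\hsn = \sigma_n^* D$. Since $h_n$ and $\sigma_n$ are proper and $D$ is Cartier, the projection formula gives
\begin{align*}
\int_{\hns{n}{S}} c_1(\Fc) \cdot (H_n)^{2n-1} &= \int_{\sym{n}{S}} (h_n)_* c_1(\Fc) \cdot D^{2n-1}, \\
\int_{S^n} c_1((\Fc)_{S^n}) \cdot (\hsn)^{2n-1} &= \int_{\sym{n}{S}} (\sigma_n)_* c_1((\Fc)_{S^n}) \cdot D^{2n-1}.
\end{align*}
Thus it suffices to prove the equality of codimension-one cycle classes $(\sigma_n)_* c_1((\Fc)_{S^n}) = (n!)\,(h_n)_* c_1(\Fc)$ on $\sym{n}{S}$.

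Next I would verify this equality over the open locus $\sym{n}{S}_{\circ}$ of distinct points, where the geometry is transparent. There $h_{n,\circ}$ is an isomorphism, so $(h_n)_* c_1(\Fc)$ restricts to $c_1(\Fc_{\circ})$ under the identification $\hns{n}{S}_{\circ} \cong \sym{n}{S}_{\circ}$. On the other side, by construction $(\Fc)_{S^n}|_{S^n_{\circ}} = \overline{\sigma}_{n,\circ}^*(\Fc_{\circ})$, and $\overline{\sigma}_{n,\circ} = h_{n,\circ}^{-1}\circ \sigma_{n,\circ}$ is étale of degree $n!$; hence $c_1((\Fc)_{S^n})|_{S^n_{\circ}} = \sigma_{n,\circ}^* c_1(\Fc_{\circ})$, and the elementary identity $\sigma_* \sigma^* = \deg(\sigma)$ for a finite étale cover shows its pushforward equals $(n!)\,c_1(\Fc_{\circ})$. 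So the two classes agree on $\sym{n}{S}_{\circ}$.

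Finally I would upgrade this open-locus identity to a global one, and this is where being on a surface is essential. Because $\dim S = 2$, the big diagonal in $S^n$ has codimension $2$, so its image $\sym{n}{S}\setminus \sym{n}{S}_{\circ}$ also has codimension $2$. By the excision sequence for Chow groups, the kernel of the restriction $A_{2n-1}(\sym{n}{S}) \to A_{2n-1}(\sym{n}{S}_{\circ})$ is the image of $A_{2n-1}$ of this codimension-two boundary, which vanishes for dimension reasons; hence the restriction is injective. Since the classes $(\sigma_n)_* c_1((\Fc)_{S^n})$ and $(n!)(h_n)_* c_1(\Fc)$ agree after restriction, they agree on $\sym{n}{S}$, and intersecting with $D^{2n-1}$ yields the lemma. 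The same codimension-two phenomenon explains why the modification $(\Fc)_{S^n}$, which alters $\Fc$ only along the exceptional divisor of $h_n$, makes no difference to the computation.

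I expect the main obstacle to be precisely this last step: passing from agreement on $\sym{n}{S}_{\circ}$ to equality of honest cycle classes. To make it rigorous I must know that the two pushforwards are genuine codimension-one classes with no component hidden in the boundary, which is exactly the content of the codimension-two estimate for the diagonal. A secondary point requiring care is that $\sym{n}{S}$ is singular and the torsion-free sheaves $\Fc$ and $(\Fc)_{S^n}$ need not be locally free; however the singularities of $\sym{n}{S}$ lie in the codimension-two boundary and $D$ is Cartier, while any torsion-free sheaf on a smooth variety is locally free away from codimension two, so $c_1$ and all intersection numbers against $D^{2n-1}$ behave as for vector bundles.
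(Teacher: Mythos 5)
Your proposal is correct and follows exactly the route the paper intends: the paper's own proof is just the one-line remark that the lemma is ``a straightforward calculation using $\hns{n}{S}_{\circ}$, $\sym{n}{S}_{\circ}$, and $S^n_{\circ}$,'' and your argument (projection formula onto $\sym{n}{S}$, the degree-$n!$ \'etale comparison over the locus of distinct points, and the codimension-two excision to globalize) is precisely that calculation carried out in full.
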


\begin{proof} This is a straightforward calculation using $\hns{n}{S}_{\circ}$, $\sym{n}{S}_{\circ}$, and $S^n_{\circ}$.
\end{proof}


In the following lemma we assume \hr{3.7}{Proposition 3.7} which says the pullback of a stable bundle to a product is stable with respect to a product polarization. For the sake of the exposition we give the proof of \hr{3.7}{Proposition 3.7} in \hr{Sec3}{Section 3}. 

\begin{lemma}\label{1.3} If $\Ec \not\cong \Oc_S$ is slope stable on $S$ with respect to an ample divisor $H$ then there are no $\snf$-equivariant subsheaves of $\Ec^{\boxplus n}$ that are slope destabilizing with respect to 
$H_{S^n}$.
\end{lemma}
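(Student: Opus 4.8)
The plan is to show that $\Ec^{\boxplus n}$ is polystable of some slope $\mu$ with respect to $\hsn$, and that its only $\snf$-equivariant saturated subsheaves of slope $\mu$ are $0$ and $\Ec^{\boxplus n}$ itself, neither of which has intermediate rank. First I would establish the polystability. Viewing $S^n$ as the product of its $i$th factor with the remaining $S^{n-1}$, the polarization $\hsn = \sum_k q_k^*(H)$ is a product polarization, so by \hr{3.7}{Proposition 3.7} each summand $q_i^*(\Ec)$ is slope stable with respect to $\hsn$. By the $\snf$-symmetry all these summands share a common slope $\mu := \slhf{\hsn}{q_i^*(\Ec)}$, hence $\Ec^{\boxplus n} = \bigoplus_i q_i^*(\Ec)$ is polystable with $\slhf{\hsn}{\Ec^{\boxplus n}} = \mu$. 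In particular $\Ec^{\boxplus n}$ is semistable, so any destabilizing subsheaf $\Fc$ must satisfy $\slhf{\hsn}{\Fc} = \mu$; replacing $\Fc$ by its saturation (which remains $\snf$-equivariant by normality of $S^n$ and uniqueness of the equivariant structure) I may assume $\Fc$ is saturated of slope exactly $\mu$, with the same rank as the original.

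The key step is to show the summands are pairwise $\mathrm{Hom}$-orthogonal. For $i \neq j$, Künneth gives $\mathrm{Hom}(q_i^*(\Ec), q_j^*(\Ec)) = H^0(S^n, q_i^*(\Ec^\vee) \otimes q_j^*(\Ec)) = H^0(S, \Ec^\vee) \otimes H^0(S, \Ec)$, using $H^0(S, \Oc_S) = \cc$ on the remaining factors. Since $\Ec$ is stable, when $\slhf{H}{\Ec} > 0$ the bundle $\Ec^\vee$ has negative slope so $H^0(S,\Ec^\vee) = 0$, and symmetrically $H^0(S,\Ec) = 0$ when $\slhf{H}{\Ec} < 0$; when $\slhf{H}{\Ec} = 0$ the hypothesis $\Ec \not\cong \Oc_S$ forces $H^0(S,\Ec) = 0$, because a nonzero section would give a copy of $\Oc_S$ inside $\Ec$ of slope $0 = \slhf{H}{\Ec}$, contradicting stability (in rank one it would force $\Ec \cong \Oc_S$). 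Thus $\mathrm{Hom}(q_i^*(\Ec), q_j^*(\Ec)) = 0$ for all $i \neq j$. The same computation yields $\mathrm{End}(q_i^*(\Ec)) = \mathrm{End}_S(\Ec) = \cc$, so each $q_i^*(\Ec)$ is a simple object of the abelian category of $\hsn$-semistable sheaves of slope $\mu$.

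To conclude, I would argue inside that abelian category. There $\Ec^{\boxplus n}$ is a direct sum of pairwise non-isomorphic simple objects, so its subobjects are precisely the coordinate subsums $\bigoplus_{i \in I} q_i^*(\Ec)$ for $I \subseteq \{1, \dots, n\}$; in particular the saturated slope-$\mu$ subsheaf $\Fc$ equals $\bigoplus_{i \in I} q_i^*(\Ec)$ for some such $I$. The $\snf$-equivariant structure permutes the summands according to the transitive action of $\snf$ on $\{1,\dots,n\}$, so $\snf$-invariance of $\Fc$ forces $I = \emptyset$ or $I = \{1,\dots,n\}$, i.e. $\Fc = 0$ or $\Fc = \Ec^{\boxplus n}$. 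Since neither has intermediate rank, no $\snf$-equivariant subsheaf of intermediate rank destabilizes $\Ec^{\boxplus n}$.

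The main obstacle is the $\mathrm{Hom}$-orthogonality in the second step, which is exactly where the hypothesis $\Ec \not\cong \Oc_S$ is used: if $\Ec \cong \Oc_S$ the summands $q_i^*(\Oc_S)$ all coincide, $\Oc_S^{\oplus n}$ acquires large $\snf$-equivariant diagonal subsheaves, and the conclusion correctly fails. The slope-zero case of the $H^0$-vanishing, together with checking that saturation and the abelian-category formalism interact cleanly with the equivariant structure, are the points that require the most care.
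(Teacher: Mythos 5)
Your proposal is correct, and it reaches the same three pillars as the paper's proof --- Proposition 3.7 gives stability of each summand $q_i^*\Ec$ with respect to $\hsn$, the distinct summands admit no nonzero maps to one another (this is where $\Ec \not\cong \Oc_S$ enters), and transitivity of the $\snf$-action on the summands finishes --- but the two technical devices you use are genuinely different. For the non-interaction of summands, the paper argues that a nonzero map between stable sheaves of equal slope is an isomorphism and asserts $q_i^*\Ec \not\cong q_j^*\Ec$ for $i \ne j$; you instead compute $\mathrm{Hom}(q_i^*\Ec, q_j^*\Ec) \cong H^0(S,\Ec^\vee)\otimes H^0(S,\Ec)$ by K\"unneth and kill it by a sign analysis on $\slhf{H}{\Ec}$, which is more self-contained and isolates exactly where the hypothesis $\Ec \not\cong \Oc_S$ is needed (the slope-zero case). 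For the handling of an arbitrary equivariant subsheaf, the paper passes to a maximal-slope stable subsheaf $\Fc' \subset \Fc$ and runs an elementary two-case analysis on the compositions $\Fc' \ra q_j^*\Ec$; you instead classify \emph{all} saturated slope-$\mu$ subsheaves as coordinate subsums via semisimplicity. This is a stronger structural statement, but it is also the one place where your write-up leans on machinery that does not literally exist as stated: in dimension $>1$ the slope-semistable sheaves of a fixed slope do \emph{not} form an abelian category under sheaf maps (for instance $\mathcal{I}_Z \hookrightarrow \Oc_{S^n}$ with $Z$ of codimension $2$ is both a monomorphism and an epimorphism there, yet not an isomorphism). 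The standard repair is to work in the quotient category of coherent sheaves modulo those supported in codimension $\ge 2$, where stable sheaves are simple, where $\mathrm{Hom}$'s into reflexive sheaves are unchanged, and where your isotypic argument goes through; the conclusion then transfers back to honest subsheaves because two saturated subsheaves of $\Ec^{\boxplus n}$ agreeing in codimension $1$ coincide (the quotient by either is torsion-free). So your route does close, at the cost of this codimension-$2$ bookkeeping --- which you flagged but did not carry out --- whereas the paper's maximal-destabilizing-subsheaf trick avoids it entirely; what your route buys in exchange is the full classification of the maximal-slope saturated subsheaves of $\Ec^{\boxplus n}$, not merely the non-existence of equivariant destabilizing ones.
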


\begin{proof} Let $0 \ne \Fc \subset \Ec^{\boxplus n}$ be an $\snf$-equivariant subsheaf. We can find a (not necessarily equivariant) slope stable subsheaf $0 \ne \Fc' \subset \Fc$ which has maximal slope with respect to $H_{S^n}$. Fix $i$ so that the composition:

\begin{center}
$\Fc' \ra \Ec^{\boxplus n} \ra q_i^* \Ec$
\end{center}

\noindent is nonzero. By \hr{3.7}{Proposition 3.7} we know that each $q_i^* \Ec$ is slope stable with respect to $H_{S^n}$. A nonzero map between slope stable sheaves can only exist if

\begin{enumerate}
\item the slope of $\Fc'$ is less than the slope of $q_i^* \Ec$, or
\item $\Fc' \ra q_i^* \Ec$ is an isomorphism.
\end{enumerate}

In case (1), $\displaystyle \slhf{H_{S^n}}{\Fc} \le \slhf{H_{S^n}}{\Fc'} < \slhf{H_{S^n}}{q_i^* \Ec}$. By symmetry, $\slhf{H_{S^n}}{q_i^* \Ec} = \slhf{H_{S^n}}{q_j^* \Ec}$ for all $i$ and $j$. Thus $\slhf{H_{S^n}}{q_i^* \Ec} = \slhf{H_{S^n}}{\Ec^{\boxplus n}}$ and $\Fc$ does not destabilize $\Ec^{\boxplus n}$.

In case (2), we know $\Fc' \cong q_i^* \Ec$. Because $\Ec \not\cong \Oc_S$, the pullbacks $q_i^* \Ec$ and $q_j^* \Ec$ are not isomorphic unless $i = j$. As all the $q_j^* \Ec$ have the same slope and are stable with respect to $H_{S^n}$, $\mathrm{Hom}(\Fc',q_j^* \Ec)=0$ for $j \ne i$. In particular all the compositions

\begin{center}
$\Fc' \ra \Ec^{\boxplus n} \ra q_j^* \Ec$
\end{center}

\noindent are zero for $j \ne i$. Thus $\Fc'$ is a summand of $\Ec^{\boxplus n}$. So $\Fc$ is an $\snf$-equivariant subsheaf of $\Ec^{\boxplus n}$ which contains one of the summands. But $\snf$ acts transitively on the summands so $\Fc$ contains all the summands, hence $\Fc$ does not destabilize $\Ec^{\boxplus n}$.
\end{proof}

Now we prove \hr{A}{Theorem A} in full generality.

\begin{theorem}\label{Agen} If $\Ec \not\cong \Oc_S$ is a vector bundle on $S$ which is slope stable with respect to an ample divisor $H$, then $\enl{n}{\Ec}$ is slope stable with respect to $H_n$.
\end{theorem}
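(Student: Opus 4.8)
The plan is to reduce slope stability of $\enl{n}{\Ec}$ on $\hns{n}{S}$ to the equivariant statement already established on $S^n$, namely \hr{1.3}{Lemma 1.3}, by transporting any hypothetical destabilizing subsheaf across the functor $(-)_{S^n}$ and controlling how slopes transform via \hr{1.2}{Lemma 1.2}.

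Suppose for contradiction that $\enl{n}{\Ec}$ is not slope stable with respect to $H_n$, so there is a subsheaf $0 \ne \Fc \subset \enl{n}{\Ec}$ of intermediate rank with $\slhf{H_n}{\Fc} \ge \slhf{H_n}{\enl{n}{\Ec}}$. First I would replace $\Fc$ by its saturation inside $\enl{n}{\Ec}$: since $S$ is a smooth surface, $\hns{n}{S}$ is smooth, so a saturated subsheaf of the locally free sheaf $\enl{n}{\Ec}$ is reflexive, and because $H_n$ is nef the saturation can only raise the degree and hence the slope. Thus without loss of generality $\Fc$ may be taken reflexive, still destabilizing, and of the same intermediate rank. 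This reduction is exactly what is needed for the equivariant structure to descend under $(-)_{S^n}$.

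Next I would apply the left-exact functor $(-)_{S^n}$ to the inclusion $\Fc \hookrightarrow \enl{n}{\Ec}$. Reflexivity of $\Fc$ guarantees the natural $\snf$-equivariant structure pushes forward, and left-exactness together with \hr{1.1}{Lemma 1.1} yields an inclusion of $\snf$-equivariant sheaves $(\Fc)_{S^n} \hookrightarrow (\enl{n}{\Ec})_{S^n} \cong \Ec^{\boxplus n}$. Since $\overline{\sigma}_{n,\circ}$ is étale and the inclusion $j$ has complement of codimension two, the functor $(-)_{S^n}$ preserves rank; hence $(\Fc)_{S^n}$ is nonzero and of intermediate rank in $\Ec^{\boxplus n}$. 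Finally, applying \hr{1.2}{Lemma 1.2} to both $\Fc$ and $\enl{n}{\Ec}$ and dividing by the preserved ranks shows $\slhf{H_{S^n}}{(\Fc)_{S^n}} = (n!)\,\slhf{H_n}{\Fc}$ and $\slhf{H_{S^n}}{\Ec^{\boxplus n}} = (n!)\,\slhf{H_n}{\enl{n}{\Ec}}$, so the destabilizing inequality transforms into $\slhf{H_{S^n}}{(\Fc)_{S^n}} \ge \slhf{H_{S^n}}{\Ec^{\boxplus n}}$. This exhibits an $\snf$-equivariant destabilizing subsheaf of $\Ec^{\boxplus n}$, contradicting \hr{1.3}{Lemma 1.3}.

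The main obstacle is the bookkeeping around $(-)_{S^n}$: one must be sure the destabilizing subsheaf can be chosen reflexive so that the equivariant structure genuinely survives the transfer to $S^n$, and one must verify both that the functor preserves rank and that it scales the $H$-degree by the controlled factor $n!$. The rank-preservation hinges on the codimension-two complement of $S^n_\circ$, and the degree comparison is precisely \hr{1.2}{Lemma 1.2}; once these are in hand the contradiction with \hr{1.3}{Lemma 1.3} is immediate.
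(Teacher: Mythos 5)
Your proposal is correct and takes essentially the same route as the paper's proof: reduce to a reflexive (saturated) subsheaf, transfer it via the left-exact functor $(-)_{S^n}$ and \hr{1.1}{Lemma 1.1} to an $\snf$-equivariant subsheaf of $\Ec^{\boxplus n}$, compare slopes using \hr{1.2}{Lemma 1.2}, and conclude by \hr{1.3}{Lemma 1.3}. The only differences are cosmetic --- you argue by contradiction where the paper argues directly, and you spell out the rank-preservation and the $n!$ scaling factor that the paper leaves implicit.
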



\begin{proof}\label{pA} Let $\Fc \subset \enl{n}{\Ec}$ be a reflexive subsheaf of intermediate rank. It is enough to consider reflexive sheaves because the saturation of a torsion free subsheaf of $\enl{n}{\Ec}$ is reflexive of the same rank and its slope cannot decrease. By \hr{1.2}{Lemma 1.2}, the slope of a torsion-free sheaf $\Fc$ with respect to $H_n$ is up to a fixed positive multiple the same as the slope of $(\Fc)_{S^n}$ with respect to $H_{S^n}$. In particular

\begin{center}
$\slhf{H_n}{\Fc}<\slhf{H_n}{\enl{n}{\Ec}} \iff \slhf{H_{S^n}}{(\Fc)_{S^n}} < \slhf{H_{S^n}}{\Ec^{\boxplus n}}$.
\end{center}

\noindent Now $(\Fc)_{S^n}$ is naturally an $\snf$-equivariant subsheaf of $\Ec^{\boxplus n}$. Thus by \hr{1.3}{Lemma 1.3}

\begin{center}
$\slhf{H_{S^n}}{(\Fc)_{S^n}} < \slhf{H_{S^n}}{\Ec^{\boxplus n}}$.
\end{center}

\noindent Therefore, $\slhf{H_n}{\Fc}<\slhf{H_n}{\enl{n}{\Ec}}$ for all torsion-free subsheaves of intermediate rank, and $\enl{n}{\Ec}$ is stable with respect to $H_n$.
\end{proof}

\begin{remark}[On the Bogomolov inequalities]
If $\Ec$ is a vector bundle on $S$ stable with respect to $H$, then stability of $\enl{n}{\Ec}$ with respect to $H_n$ along with the perturbation argument from Section 3 implies the Bogomolov type topological inequality:
\begin{center}
$(r-1)c_1(\enl{n}{\Ec})^2 \cdot H_n^{2n-2} \le 2r c_2(\enl{n}{\Ec})\cdot H_n^{2n-2}$.
\end{center}
\noindent These intersection numbers can be rewritten in terms of the intersection theory of $S$ and these inequalities reduce to the regular Bogomolov inequality on $S$ and the inequality coming from the Hodge index theorem.
\end{remark}


\section{Geometry of tautological bundles}\label{Sec2}

In this section we give examples of the geometry inherent to the tautological vector bundles. For each curve we construct an embedding in the Hilbert scheme of $n$ points in the plane such that the restriction of $\enlo$ to the curve is some prescribed vector bundle. Next, we show that the tautological bundles on the space of 2 points in the plane have explicit resolutions, making them a relative analogue of Steiner bundles in the plane. Finally, for any smooth surface we construct a map from the tautological bundle of the tangent bundle of the surface to the tangent bundle of the Hilbert scheme of points on the surface which realizes the first bundle as the sheaf of logarithmic vector fields.


\subsection{Restrictions to curves}
In this section we prove every sufficiently positive, rank $n$, semistable vector bundle on a smooth projective curve arises as the pull back of $\enlo$ along an embedding of the curve in $\hnpt$. To prove the theorem we need the spectral curves of \cite{BNR}. For completeness we recall the construction.

Let $\pi : D \ra C$ be an $n:1$ map between smooth irreducible projective curves and let $\Ec$ be an $\Oc_C$-module. If $D$ can be embedded into the total space of a line bundle $\Lc$ on $C$:

\begin{center}
$\mathbb{L} \dfn \mathcal{S}pec_{\Oc_C} (\sym{\bullet}{\Lc^{\vee}}) \xrightarrow{\pi_{\mathbb{L}}} C$
\end{center}

\noindent with $\pi = \pi_{\mathbb{L}}|_D$ then this gives a presentation:

\begin{center}
$\pi_* \Oc_D \cong \sym{\bullet}{\Lc^{\vee}} \Big/ (x^n + s_1 x^{n-1} + ... + s_n)$
\end{center}

\noindent for $x^n + s_1 x^{n-1} + ... + s_n \in H^0(\mathbb{L} , ({\pi_{\mathbb{L}}}^* \Lc)^{\otimes n})$. Here we write $x \in H^0(\mathbb{L},{\pi_{\mathbb{L}}}^*(\Lc))$ for the \textit{coordinate section} of ${\pi_{\mathbb{L}}}^*(\Lc)$. To give $\Ec$ the structure of a $\pi_*\Oc_D$-module we need to specify a multiplication map $m: \Ec \otimes \Lc^{-1} \ra \Ec$ (equivalently $\Ec \ra \Ec \otimes \Lc$) which satisfies the relation
$m^n + s_1 m^{n-1} + ... + s_n = 0$.

Every $\Lc$-twisted endomorphism $m : \Ec \ra \Ec \otimes \Lc$ has an associated $\Lc$-twisted characteristic polynomial, which is a global section $p_m(x) \in H^0(\mathbb{L},({\pi_{\mathbb{L}}}^* \Lc)^{\otimes n})$. A global version of the Cayley-Hamilton theorem says that $m$ automatically satisfies its $\Lc$-twisted characteristic polynomial. In particular, if the zero set of $p_m(x)$ is $D$ then $\Ec$ can naturally be thought of as a $\pi_*\Oc_D$-module. Fixing $s \in H^0(\mathbb{L},({\pi_{\mathbb{L}}}^* \Lc)^{\otimes n})$ which cuts out the integral curve $D$, \cite[Proposition 3.6]{BNR} gives the beautiful correspondence:
\begin{equation}\label{eqn:diamond}
\left\{ \mathcal{E} \xrightarrow{m} \mathcal{E} \otimes \Lc \text{ } \Big| \mathcal{E} \text{ a vector bundle and } p_m(x) = s \right\} \stackrel{1:1}{\longleftrightarrow}  \{ \text{invertible sheaves } \Mcal \text{ on } D \}.\tag{$\diamond$}
\end{equation}
The correspondence going from right to left is given by taking the coordinate section of ${\pi_{\mathbb{L}}}^*(\Lc)$, restricting to $D$, twisting by $\Mcal$, and pushing forward along $\pi$.


\begin{proof}[Proof of \hr{C}{Theorem C}] Let $C$ be a smooth projective genus $g$ curve and $\Ec$ a rank $n$ semistable vector bundle on $C$. Let $\Lc$ be a line bundle on $C$. As $\Ec$ is semistable, $\Ec \otimes \Ec^{\vee}$ is also semistable and has slope 0, hence $\Ec \otimes \Ec^{\vee} \otimes \Lc$ is globally generated when $\deg{\Lc} \ge 2g$. Therefore, if

\begin{center}
$m: \Ec \ra \Ec \otimes \Lc$
\end{center}

\noindent is a general $\Lc$-twisted endomorphism then the resulting $\Lc$-twisted characteristic polynomial is smooth with simple branching. In fact, if $V \subset \mathbb{E} \otimes \mathbb{E}^{\vee} \otimes \mathbb{L}$ is the locus of $\Lc$-twisted endomorphisms whose characteristic polynomial has repeated roots and if $m : C \ra \mathbb{E} \otimes \mathbb{E}^{\vee} \otimes \mathbb{L}$ meets $V$ transversely and avoids the locus of $V$ where the $\Lc$-twisted characteristic polynomial has repeated roots to a higher multiplicity, then the resulting spectral curve $D$ is smooth and connected.

Thus, by the correspondence \eqref{eqn:diamond} there is a line bundle $\Mcal$ on $D$ such that $\pi_*\Mcal \cong \Ec$. The genus of $D$ is $g_D = {r \choose 2} \mathrm{deg}(\Lc) + n(g-1) + 1$ and is independent of $\Ec$. However, the degree of $\Mcal$ is $ \mathrm{deg}(\Ec) + {r \choose 2}\mathrm{deg}(\Lc)$ and does depend on the degree of $\Ec$. In particular, if

\begin{center}
$\mathrm{deg}(\Ec) \ge {r \choose 2}\mathrm{deg}(\Lc) + r(2g-2) + 3$
\end{center}

\noindent then $\Mcal$ is very ample and 3 general sections of $\Mcal$ give a map $\phi : D \ra \pt$ such that the induced maps $\pi \times \phi : D \ra C \times \pt$ and $\psi_{\pi,\phi} : C \ra \hnpt$ are embeddings. Under the embedding $\psi_{\pi,\phi}$ the restriction of $\enlo$ to $C$ is precisely $\Ec$, proving Theorem C.
\end{proof}


\subsection{Two points in the projective plane}
Now we restrict our attention to the Hilbert scheme of 2 points in the plane. As a reminder, if we identify $\pt = \sym{2}{\mathbb{P}^1}$ (where $\mathbb{P}^1 = \mathbb{P}(W)$), then for $k\ge 2$ the tautological bundle $\enl{2}{\Oc_{\mathbb{P}^1}(k)}$ has a natural 2-term resolution:

\begin{center}
$\displaystyle 0 \ra \sym{k-2}{W} \otimes_\cc \Oc_{\pt}(-1) \xrightarrow{m} \sym{k}{W} \otimes_\cc \Oc_{\pt} \ra \enl{2}{\Oc_{\mathbb{P}^1}(k)} \ra 0$.
\end{center}

\noindent Here

\begin{center}
$\displaystyle m \in \mathrm{Hom}(\sym{k-2}{W} \otimes_\cc \Oc_{\pt}(-1),\sym{k}{W} \otimes_\cc \Oc_{\pt})$\\
\hspace{4cm}$\cong \mathrm{Hom}(\sym{k-2}{W} \otimes \sym{2}{W},\sym{k}{W})$
\end{center}

\noindent is the multiplication in the symmetric algebra. In general, any bundle on $\mathbb{P}^N$ with a similar 2-term linear resolution is called a Steiner bundle.

The Hilbert scheme of two points on the projective plane $\pv{V}$ has a natural map to $\pv{V^\vee}$. The map
\begin{center}
$\psi : \hns{2}{\pv{V}} \ra \pv{V^\vee}$
\end{center}
\noindent exhibits $\hns{2}{\pv{V}}$ as a two dimensional projective bundle over $\pv{V^\vee}$. Geometrically, $\psi$ is given by sending a subscheme $\br{\xi}$ to the line that is spanned by $\xi$. A fiber of $\psi$ is the second symmetric power of the corresponding line. Viewing $\hns{2}{\pv{V}}$ as a $\pt$-bundle over $\pv{V^\vee}$ the tautological bundles come with a two term \textit{relative linear} resolution, making them a relative version of the Steiner bundles in the plane.

\begin{subproposition}
For $k \ge 2$ there is a 2 term relatively linear resolution of $\enl{2}{\Oc_{\pv{V}}(k)}$ given by
\begin{center}
$\displaystyle 0 \ra \psi^*(\mathrm{Sym}^{k-2} K_{(V^\vee)}^\vee)(-1) \ra \psi^*(\mathrm{Sym}^{k} K_{(V^\vee)}^\vee) \ra \enl{2}{\Oc_{\pv{V}}(k)} \ra 0 $
\end{center}
\noindent where $K_{(V^\vee)}$ is the kernel bundle in the tautological sequence on $\pv{V^\vee}$:
\begin{center}
$0 \ra K_{(V^\vee)} \ra V^\vee \otimes \Oc_{\pv{V^\vee}} \ra \Oc_{\pv{V^\vee}}(1) \ra 0$.
\end{center}
\end{subproposition}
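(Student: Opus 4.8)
The plan is to produce the entire resolution in a single step, by pushing forward the ideal sequence of the universal subscheme along the pulled-back universal line, rather than by globalizing the fibrewise (Steiner) resolution of the $\mathbb{P}^1$ case by hand.

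First I would set up the geometry over $\pv{V^\vee}$. Since $\psi$ sends $\br{\xi}$ to the line it spans and the points of $\pv{V^\vee}$ are precisely the lines in $\pv{V}$, the universal line is the $\mathbb{P}^1$-bundle $\pv{K_{(V^\vee)}}\ra\pv{V^\vee}$ inside $\pv{V}\times\pv{V^\vee}$; the tautological sequence identifies the fibre of $K_{(V^\vee)}$ over a line $\ell$ with the two-dimensional space $W$ for which $\ell=\pv{W}$. Consequently $\hns{2}{\pv{V}}=\pv{\sym{2}{K_{(V^\vee)}}}$ with structure map $\psi$ and relative tautological inclusion $\Oc_\psi(-1)\hookrightarrow\psi^*\sym{2}{K_{(V^\vee)}}$. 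Pulling the universal line back along $\psi$ gives a $\mathbb{P}^1$-bundle $\rho\colon P\dfn\pv{\psi^*K_{(V^\vee)}}\ra\hns{2}{\pv{V}}$ together with a map $g\colon P\ra\pv{V}$, and the universal subscheme $\Zcal_2$ sits inside $P$ as a relative degree-two divisor, with $g|_{\Zcal_2}=p_1$ and $\rho|_{\Zcal_2}=p_2$.

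Next comes the computation. The key identifications are $g^*\Oc_{\pv{V}}(k)=\Oc_\rho(k)$ (the hyperplane bundle restricted to the universal line), $\rho_*\Oc_\rho(m)=\psi^*\sym{m}{K_{(V^\vee)}^\vee}$ for $m\ge0$, and --- the heart of the matter --- $\Oc_P(-\Zcal_2)=\Oc_\rho(-2)\otimes\rho^*\Oc_\psi(-1)$, which I would verify by recognizing $\Zcal_2$ as the universal degree-two divisor on $\pv{\sym{2}{K_{(V^\vee)}}}$. Tensoring the ideal sequence of $\Zcal_2$ in $P$ with $\Oc_\rho(k)=g^*\Oc_{\pv{V}}(k)$ yields
\[
0 \ra \Oc_\rho(k-2)\otimes\rho^*\Oc_\psi(-1) \ra \Oc_\rho(k) \ra \Oc_\rho(k)|_{\Zcal_2} \ra 0 ,
\]
which I would push forward along $\rho$. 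By the projection formula the first two pushforwards become $\psi^*(\sym{k-2}{K_{(V^\vee)}^\vee})\otimes\Oc_\psi(-1)$ and $\psi^*\sym{k}{K_{(V^\vee)}^\vee}$, while the pushforward of the third term is $\enl{2}{\Oc_{\pv{V}}(k)}$ directly from the definition of the tautological bundle, since $\rho|_{\Zcal_2}=p_2$ and $g|_{\Zcal_2}=p_1$. The only point needed for exactness is the vanishing of the connecting term, which follows from $R^1\rho_*\Oc_\rho(k-2)=0$ (valid for $k\ge1$, hence certainly for $k\ge2$). This produces exactly the stated sequence, and the construction makes the ``relatively linear'' claim transparent: both outer terms are pulled back from $\pv{V^\vee}$ and twisted only by $\Oc_\psi(0)$ and $\Oc_\psi(-1)$.

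I expect the main obstacle to be pinning down the base twist in $\Oc_P(\Zcal_2)=\Oc_\rho(2)\otimes\rho^*\Oc_\psi(1)$ --- equivalently, matching the relative $\Oc_\psi(1)$ of $\hns{2}{\pv{V}}=\pv{\sym{2}{K_{(V^\vee)}}}$ with the universal divisor class --- since this is precisely where the twist $(-1)$ in the resolution originates, and where the projectivization conventions (and the appearance of $K_{(V^\vee)}$ rather than its dual) must be tracked carefully. Once that identification is fixed, the cohomology-and-base-change vanishing and the recognition of the cokernel as $\enl{2}{\Oc_{\pv{V}}(k)}$ are routine.
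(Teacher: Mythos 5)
Your proposal is correct and is essentially the paper's own argument: you form the same fiber product $\hns{2}{\pv{V}} \times_{\pv{V^\vee}} \mathbb{P}(K_{(V^\vee)}^\vee)$, identify $\Zcal_2$ as a divisor of class $\Oc(1,2)$ in it, twist the ideal sequence by the relative hyperplane class $\Oc(0,k)$, and push forward, with exactness coming from the vanishing of $R^1$ for $k-2 \ge -1$. The only divergence is bookkeeping between the classical and Grothendieck projectivization conventions (whence $K_{(V^\vee)}$ versus $K_{(V^\vee)}^\vee$ in the various formulas), which you correctly flag as the one point requiring care.
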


\begin{proof}[Sketch of proof] There is an isomorphism $\hns{2}{\pv{V}} \cong \mathbb{P}(\mathrm{Sym}^2(K_{(V^\vee)}))$. From this perspective, the universal family $\Zcal_2$ is a divisor in the fiber product $X \dfn \hns{2}{\pv{V}} \times_{\pv{V^\vee}} \mathbb{P}(K_{(V^\vee)}^\vee)$. Specifically $\Oc_X(\Zcal_2) \cong \Oc_X(1,2)$. And if $p:X \ra \hns{2}{\pv{V}}$ is the projection map then

\begin{center}
$p_* \big( \Oc_{\Zcal_2}(0,k)\big) \cong \enl{2}{\Oc_{\pv{V}}(k)}$.
\end{center}

\noindent Therefore, we can twist the ideal sequence of $\Zcal_2$ and take direct images of the sequence with respect to $p$ to obtain resolutions of the tautological bundles. When $k \ge 2$ there are no higher direct images, so the pushforward of the twisted ideal sequence is exact, giving the desired resolution.
\end{proof}

\begin{subremark} We can modify the proof to obtain resolutions of $\enl{2}{\Oc_{\pv{V}}(k)}$ for all $k$.
\begin{center}
\begin{tabular}{r|l}
$k=1$ & $\displaystyle \enl{2}{\Oc_{\pv{V}}(1)} \cong \psi^* (K_{(V^\vee)}^\vee) $\\

$k=0$ & $\displaystyle 0 \ra \Oc_{\hns{2}{\pv{V}}} \ra \enl{2}{\big( \Oc_{\pv{V}} \big)} \ra \psi^* \big(\mathrm{det} (K_{(V^\vee)}^\vee)\big)(-1) \ra 0$\\

$k=-1$ & $\displaystyle \enl{2}{\Oc_{\pv{V}}(-1)} \cong \psi^* (K_{V^\vee}) \otimes \psi^*\big(\Oc_{\pv{V^\vee}}(1) \big)(-1) $\\

$k\le -2$ & $\displaystyle 0 \ra \enl{2}{\Oc_{\pv{V}}(k)} \ra  \psi^*\Big( \big(\mathrm{Sym}^{k} K_{(V^\vee)}\big)(1)\Big)(-1) \ra \psi^*\Big( \big(\mathrm{Sym}^{k-2} K_{(V^\vee)}\big)(1) \Big) \ra 0$.\\
\end{tabular}
\end{center}
\end{subremark}

\begin{subremark}
For $N > 2$, the Hilbert scheme of 2 points on $\mathbb{P}^N$ is smooth. There is an analogous map:
\begin{center}$\psi : \hns{2}{(\mathbb{P}^N)} \ra \mathrm{Gr}(2,N+1)$,
\end{center}
\noindent and the tautological bundles have 2 term relative linear resolutions as in the case $N=2$.
\end{subremark}


\subsection{The tautological tangent map} For any smooth surface $S$ (not necessarily projective), the Hilbert scheme $\hns{n}{S}$ is a smooth closure of the space of $n$ distinct points in $S$. The boundary $B_n$ is the locus of nonreduced length $n$ subschemes of $S$. We are interested in vector fields which are tangent to the boundary $B_n$.

\begin{subdefinition}
If $D$ is a codimension 1 subvariety of $X$ a smooth variety, then the sheaf of logarithmic vector fields, denoted $\mathrm{Der}_\cc(\mathrm{-log}D)$, is the subsheaf of $T_X$ consisting of vector fields which along the regular locus of $D$ are tangent to $D$.
\end{subdefinition}

\noindent When $D$ is smooth, $\mathrm{Der}_\cc(\mathrm{-log}D)$ is just the elementary transformation of the tangent bundle along the normal bundle of $D$ in $X$, in particular it is a vector bundle. Even when $D$ is singular $\mathrm{Der}_\cc(\mathrm{-log}D)$ is reflexive, so it is enough to define $\mathrm{Der}_\cc(\mathrm{-log}D)$ away from the singular locus (or any codimension 2 set in $X$) of $D$ and then pushforward.

For Hilbert schemes of points on surfaces we can naturally understand $\mathrm{Der}_\cc(\mathrm{-log}B_n)$ as the tautological bundles of the tangent bundle on the surface.

\begin{Theorem B} For any smooth connected surface $S$ there exists a natural injection:
\begin{center}
$\displaystyle \alpha_n : \tnl \ra T_{\hns{n}{S}}$,
\end{center}
\noindent and $\alpha_n$ induces an isomorphism between $\tnl$ and $\dern$.
\end{Theorem B}

\noindent At a point $\br{\xi} \in \hns{n}{S}$ the map $\alpha_n|_{\br{\xi}}$ can be interpreted as deformations of $\xi$ coming from tangent vectors of $S$. We expect that the degeneracy loci of $\alpha_n$ give a interesting stratification of $\hns{n}{S}$.

Before proving \hr{B}{Theorem B} we prove a general lemma.

\begin{sublemma}
Let $X$ and $Y$ be smooth varieties and $f: X \ra Y$ a branched covering with reduced branch locus $B \subset Y$. If $\delta \in H^0(Y,TY)$ is a vector field on $Y$ whose pullback $f^* \delta \in H^0(X,f^*TY)$ is in the image of
\begin{center}
$df: H^0(X,TX) \ra H^0(X,f^*TY)$,
\end{center}
\noindent then $\delta \in H^0(Y,\mathrm{Der}_\cc(\mathrm{-log}B))$.
\end{sublemma}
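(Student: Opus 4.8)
The plan is to reduce the statement to a purely local computation at a general point of the branch locus. Since $\mathrm{Der}_\cc(\mathrm{-log}B)$ is reflexive and, by definition, determined by the tangency condition along the regular locus of $B$, it suffices to show that the ``normal component'' of $\delta$ vanishes along $B$ in a neighborhood of a general point $p$ of each component of $B$. Concretely, near such a $p$ I would choose local coordinates $y_1,\dots,y_m$ on $Y$ with $B = \{y_1 = 0\}$ and write $\delta = \sum_j a_j \partial_{y_j}$; the condition $\delta \in \mathrm{Der}_\cc(\mathrm{-log}B)$ is then exactly $a_1 \in \mathcal{I}_B = (y_1)$, i.e. the vanishing along $B$ of the coefficient $a_1$ of the normal field $\partial_{y_1}$.

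Next I would extract this coefficient from the lifting hypothesis. Because $B = f(R)$ for the ramification divisor $R \subset X$, over a general $p \in B$ I can pick a point $\tilde p \in R$ lying over $p$ at which both $R$ and $B$ are smooth. Choosing local coordinates $x_1,\dots,x_m$ on $X$ near $\tilde p$ and setting $g_j \dfn f^* y_j$, the hypothesis furnishes a vector field $\tilde\delta = \sum_i b_i \partial_{x_i} \in H^0(X,TX)$ with $df(\tilde\delta) = f^*\delta$. Comparing the $\partial_{y_1}$-components of the two sides yields the key identity
\begin{center}
$\displaystyle f^* a_1 = \sum_i b_i \frac{\partial g_1}{\partial x_i} = \tilde\delta(g_1)$,
\end{center}
so it remains to show that $\tilde\delta(g_1)$ vanishes along $R$ near $\tilde p$, since then $f^* a_1(\tilde p)=0$ forces $a_1(p)=0$.

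The ramification structure supplies exactly this vanishing. In characteristic zero $f$ is tamely ramified, so along the component $R_0$ of $R$ through $\tilde p$ the ramification index is some $e \ge 2$, and since $B$ is reduced the local pullback divisor $\mathrm{div}(g_1) = f^* B$ contains $R_0$ with multiplicity $e \ge 2$. Thus, writing $R_0 = \{r = 0\}$ locally with $r$ part of a coordinate system, $g_1 \in (r^e) \subseteq (r^2)$, whence every first partial $\partial g_1/\partial x_i$ lies in $(r)$ by the product rule and $\tilde\delta(g_1) \in (r)$ vanishes along $R_0$. Combined with the identity above, $f^* a_1$ vanishes at $\tilde p$; as $p$ ranges over a dense subset of each component of $B$ and $B$ is reduced, this gives $a_1 \in (y_1)$ and hence $\delta \in H^0(Y,\mathrm{Der}_\cc(\mathrm{-log}B))$.

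I expect the main obstacle to be the bookkeeping that connects the ramification divisor $R$ with the pullback of $B$ --- namely verifying that $g_1 = f^* y_1$ vanishes to order at least two along the ramification locus and that a general point of $B$ admits a preimage lying on the smooth locus of $R$. The differential-geometric heart of the argument, that a function vanishing to order $\ge 2$ along a smooth divisor has all first derivatives vanishing along it, is elementary; everything rests on correctly packaging the tame ramification input and on the reflexivity reduction that lets us ignore the singularities of $B$.
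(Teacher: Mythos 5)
Your proposal is correct and follows essentially the same strategy as the paper's proof: reduce via reflexivity to a general point of the reduced branch divisor, lift to a ramification point, compare the $\partial_{y_1}$-components of $df(\tilde\delta)$ and $f^*\delta$ (giving the identity $f^*a_1 = \tilde\delta(f^*y_1)$), and use that the ramification index is at least $2$ in characteristic zero to force the normal coefficient to vanish. The only real difference is in the local bookkeeping: the paper chooses adapted coordinates in which $f$ is the standard cyclic cover $f^*(y_1)=x_1^m$, $f^*(y_i)=x_i$ and concludes by divisibility of germs ($x_1^{m-1}$ divides the pulled-back coefficient, hence $y_1$ divides it), whereas you avoid invoking the normal form altogether, using only the divisorial inequality $f^*B \ge 2R$ together with the product rule, and finish by pointwise vanishing at a dense set of regular points of $B$; both routes are valid and the variation is minor.
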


\begin{proof} It is enough to check $\delta$ is tangent to $B$ for points $p \in B$ outside of a codimension 2 subset in $Y$. Let $p \in B$ be a general point and $q$ a ramified point in the fiber of $f$ over $p$. We can choose local analytic coordinates $y_1 , ... , y_n$ centered at $p$ and coordinates $x_1 , ... , x_n$ centered at $q$ such that

\begin{center}
$f^*(y_1)=x_1^m$\\ \hspace{1.3cm}$f^*(x_i)=x_i$ $(i>1)$.
\end{center}

\noindent That is $y_1$ is a local equation for $B$ and $x_1$ is a local equation for the reduced component of ramification containing $q$. Then the derivative $df$ maps

\begin{center}
$\frac{\partial}{\partial x_1} \mapsto m x_1^{m-1} f^* \big( \frac{\partial}{\partial y_1} \big)$\\
\hspace{.2cm}$\frac{\partial}{\partial x_i} \mapsto f^* \big( \frac{\partial}{\partial y_i} \big)$ $(i >1)$.
\end{center}

\noindent Now $f^* \delta$ is in the image of $df$. Expanding locally, $f^*\delta = f^*(g_1) f^* \big( \frac{\partial}{\partial y_1} \big) + ... + f^*(g_n) f^* \big( \frac{\partial}{\partial y_n} \big)$. Thus $x_1^{m-1}$ divides $f^* (g_1)$. So $y_1$ divides $g_1$ and $\delta$ is in $H^0(Y,\mathrm{Der}_\cc(\mathrm{-log}B))$.
\end{proof}

\begin{proof}[Proof of \hr{B}{Theorem B}] As in \S1 we use $\Zcaln \subset S \times \hns{n}{S}$ to denote the universal family of the Hilbert scheme of points. Applying relative Serre duality to the main result of \cite{Lehn} shows the tangent bundle of $\hns{n}{S}$ is given by $T_{\hns{n}{S}} = p_{2*} \hhom(\mathcal{I}_{\Zcaln},\Oc_{\Zcaln})$. The normal sequence for $\Zcaln$ gives a map:

\begin{center}
$\displaystyle p_1^* T_S \oplus p_2^* T_{\hns{n}{S}} \cong T_{S \times \hns{n}{S}}|_{\Zcaln} \xrightarrow{\beta} \big(\mathcal{I}_{\Zcaln}/\mathcal{I}_{\Zcaln}^2 \big)^\vee \cong \hhom(\mathcal{I}_{\Zcaln},\Oc_{\Zcaln})$.
\end{center}

\noindent Thus after pushing forward the first summand we get a map:

\begin{center}
$\displaystyle \alpha_n :\tnl \dfn p_{2*}(p_1^* T_S) \ra p_{2*}\hhom(\mathcal{I}_{\Zcaln},\Oc_{\Zcaln}) =T_{\hns{n}{S}}.$
\end{center}

To prove that $\alpha_n$ maps $\tnl$ isomorphically onto $\dern$ we first restrict to the open set $U \subset \hns{n}{S}$ parametrizing subschemes $\xi \subset S$ where $\xi$ contains at least $n-1$ distinct points. The complement of $U$ has codimension 2 so by reflexivity it is enough to prove the theorem on $U$. Moreover the open set

\begin{center}
$V \dfn p_2^{-1}U \subset \Zcaln$
\end{center}

\noindent is smooth so we are in a situation where we can apply Lemma 2.3.2. There is a map:

\begin{center}
\begin{tikzpicture}
  \node (tnl) {$p_2^*\tnl|_V$};
  \node (seq) [below] at (0,-1.2) {\hspace{1.17cm}$0 \ra T_{\Zcaln}|_V \ra p_2^*T_{\hns{n}{S}}|_V \oplus p_1^*T_S|_V \xrightarrow{\beta} \hhom(\mathcal{I}_{\Zcaln},\Oc_{\Zcaln})|_V$,};
  \draw[->] (tnl) to node[right] {$p_2^* \alpha_n|_V \oplus -\phi|_V$} (0,-1.4);
\end{tikzpicture}
\end{center}

\noindent where $\phi$ is the natural map coming from pulling back a pushforward. The composition:
\begin{center}
$\beta \circ (p_2^* \alpha_n|_V \oplus -\phi|_V)$
\end{center}

\noindent is identically zero. Therefore, the pullback of each local section of $\tnl|_U$ lies in $T_{\Zcaln}|_V$. It follows from Lemma 2.3.2 that $\tnl$ is contained in $\dern$. Now we can think of $\alpha_n$ as having codomain $\dern$. The map is an isomorphism of $\tnl$ and $\dern$ away from $B_n$ and they both have the same first Chern class. Therefore, $\alpha_n$ could only fail to be an isomorphism in codimension greater than 2. But both sheaves are reflexive, and any isomorphism between reflexive sheaves away from codimension 2 on a normal variety extends uniquely to an isomorphism on the whole variety.
\end{proof}

At a point $\br{\xi} \in \hns{n}{S}$ we have the isomorphisms $\tnl|_{\br{\xi}} \cong H^0(S,{T_S}|_{\xi})$ and $T|_{\hns{n}{S}} \cong \mathrm{Hom}(I_{\xi},\Oc_{\xi})$. From this perspective we describe the map $\alpha_n|_{\br{\xi}}$. If $\delta \in T_S|_{\br{\xi}}$ is derivation, then $\alpha_n|_{\br{\xi}}$ maps

\begin{center}
$\displaystyle \alpha_n|_{\br{\xi}}: \delta \mapsto \Big( \begin{array}{c}
    I_{\xi} \xrightarrow{\alpha_n|_{\br{\xi}}(\delta)} \Oc_{\xi} \\
    f \mapsto \delta(f)|_{\xi}
  \end{array} \Big)$.
\end{center}

From this description of $\alpha_n|_{\br{\xi}}$ we can easily compute the rank at any explicit point. For example, $\alpha_n$ is an isomorphism on the locus of $n$ distinct points, and the generic rank along $B_n$ is 2n-1. The degeneracy loci of $\alpha_n$ are defined to be:

\begin{center}
$\displaystyle \Omega_{r}(\alpha_n) \dfn \big\{ \br{\xi} \in \hns{n}{S} | \rk{\alpha_n|_{\br{\xi}}} \le r \big\}$.
\end{center}

\noindent We expect the irreducible components of $\Omega_{r}(\alpha_n)$ to give an interesting stratification of the Hilbert scheme of points, and we hope to return to study this stratification in future work.

\begin{subremark}
When $C$ is a smooth curve, the locus of nonreduced subschemes $B_n \subset \hns{n}{C}$ forms a divisor. When $C \cong \mathbb{A}^1$, this is the discriminant divisor in the space parametrizing degree $n$ polynomials in 1 variable. A similar proof shows the tautological bundle of the tangent bundle is the sheaf $\dern$, again showing $B_n$ is a free divisor.
\end{subremark}


\section{Perturbation of Polarization and Stability}

The goal of this section is to sketch a proof that stability of the tautological bundles with respect to the natural Chow divisors implies stability with respect to nearby ample divisors. We also prove in Proposition 3.7 that the pullback of a stable bundle to a product is stable with respect to a product polarization, a fact that we used in the proof of \hr{A}{Theorem A}. In proving stability with respect to nearby ample divisors we follow ideas appearing in ~\cite{Greb1} where compact moduli spaces of vector bundles are constructed using multipolarizations and more recently in ~\cite{Greb2} where foundational results for studying stability with respect to movable curves are established.

Throughout this section denote by X a normal complex projective variety of dimension $d$. Let $\gamma \in N_1(X)_{\mathbb{R}}$ be a real curve class and $\Ec$ be a torsion-free sheaf on X. For any sheaf $\Qc$ on $X$, we denote by $\sing{\Qc}$ the closed locus where $\Qc$ is not locally free.


\begin{definition}
The \textit{slope of $\Ec$ with respect to $\gamma$}, denoted by $\slgf{\gamma}{\Ec}$, is the real number:

\begin{center}
$\slgf{\gamma}{\Ec} \dfn \dfrac{c_1(\Ec) \cdot \gamma}{ \rk{\Ec}}$.
\end{center}
\end{definition}

\begin{remark}
Fixing an ample class $H \in N^1(X)_{\mathbb{R}}$ it is true that $\slhf{H}{\Ec} = \slgf{H^{d-1}}{\Ec}$. Nonetheless, to distinguish the concepts we use subscripts to denote slope with respect to an ample divisor and superscripts to denote slope with respect to a curve class.

\end{remark}


\begin{definition} We say $\Ec$ is \textit{slope (semi)stable with respect to $\gamma$} if for all torsion-free quotients of intermediate rank $\Ec \ra \Qc \ra 0$:

\begin{center}
$\slgf{\gamma}{\Ec} \underset{(\le)}{<} \slgf{\gamma}{\Qc}$.
\end{center}
\end{definition}

\noindent

A benefit of working with slope (semi)stability with respect to curves rather than divisors is that we can apply ideas of convexity.\label{cone}


\begin{lemma}\label{sum} If $\gamma, \delta$ are classes in $N_1(X)_{\mathbb{R}}$ such that $\Ec$ is semistable with respect to $\gamma$ and $\Ec$ is stable with respect to $\delta$ then $\Ec$ is stable with respect to $a \gamma + b \delta$ for $a,b > 0$. \qed
\end{lemma}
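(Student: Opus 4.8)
The plan is to exploit the one feature that distinguishes slope with respect to a curve class from slope with respect to an ample divisor: for a fixed torsion-free sheaf the slope is an $\mathbb{R}$-\emph{linear} functional of the class. Whereas $\slhf{H}{\Ec}$ depends on $H$ through the nonlinear term $H^{d-1}$, the quantity $\slgf{\gamma}{\Qc} \dfn (c_1(\Qc)\cdot\gamma)/\rk{\Qc}$ satisfies, for any $a,b>0$,
\[
\slgf{a\gamma+b\delta}{\Qc} \;=\; a\,\slgf{\gamma}{\Qc} + b\,\slgf{\delta}{\Qc},
\]
simply because intersection with the class is additive, $c_1(\Qc)\cdot(a\gamma+b\delta)=a\,(c_1(\Qc)\cdot\gamma)+b\,(c_1(\Qc)\cdot\delta)$, and the rank does not change. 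This identity is the entire engine of the argument, and it is exactly the ``convexity'' alluded to just before the statement.

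With this in hand I would argue directly from the quotient definition of stability. Let $\Ec \ra \Qc \ra 0$ be an arbitrary torsion-free quotient of intermediate rank; crucially, the collection of such quotients does not depend on the polarizing class, so the same $\Qc$ is a legitimate competitor for all three of $\gamma$, $\delta$, and $a\gamma+b\delta$. Semistability of $\Ec$ with respect to $\gamma$ furnishes the non-strict inequality $\slgf{\gamma}{\Ec}\le \slgf{\gamma}{\Qc}$, while stability with respect to $\delta$ furnishes the strict inequality $\slgf{\delta}{\Ec}< \slgf{\delta}{\Qc}$. Multiplying the first by $a>0$ and the second by $b>0$, adding, and invoking the linearity identity gives
\[
\slgf{a\gamma+b\delta}{\Ec} \;=\; a\,\slgf{\gamma}{\Ec}+b\,\slgf{\delta}{\Ec} \;<\; a\,\slgf{\gamma}{\Qc}+b\,\slgf{\delta}{\Qc} \;=\; \slgf{a\gamma+b\delta}{\Qc},
\]
where the strictness is supplied by the $\delta$-summand (the $\gamma$-summand only needs to be non-strict). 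Since $\Qc$ was arbitrary, $\Ec$ is stable with respect to $a\gamma+b\delta$, as claimed.

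There is essentially no obstacle here: once stability is phrased via quotients and curve classes, the lemma is forced by linearity, and the only point deserving a word of care is the observation that the test family of quotients is independent of the class, so the three inequalities really do compare slopes of the \emph{same} sheaves. I would emphasize that this is precisely why the curve-class formulation was adopted, since it is this convexity that lets the perturbation argument upgrade stability on the boundary of the nef cone (where the Chow divisor lives) to stability at nearby ample classes.
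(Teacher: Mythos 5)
Your proof is correct and is exactly the argument the paper has in mind: the paper states this lemma with no written proof (just a \qed), treating it as immediate from the convexity remark preceding it, namely the $\mathbb{R}$-linearity of $\gamma \mapsto \slgf{\gamma}{\Qc}$ combined with the quotient definition of (semi)stability. Your write-up simply makes that one-line observation explicit, including the correct handling of strict versus non-strict inequalities.
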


If $C \subset X$ is an irreducible curve we would like to relate the stability of $\Ec|_C $ and the stability of $\Ec$ with respect to the class of $C$. However if $\Qc$ is a coherent sheaf and $C$ meets $\sing{\Qc}$ it is possible that $c_1(\Qc|_C) \ne c_1(\Qc)|_C$. Thankfully we can say something if $C$ is not entirely contained in $\sing{\Qc}$.


\begin{proposition}\label{3.5} Let $\Ec \ra \Qc \ra 0$ be a torsion-free quotient which destabilizes $\Ec$ with respect to the curve class $\gamma$. Suppose $C \subset X$ is a smooth irreducible closed curve which represents $\gamma$, avoids $\sing{\Ec}$, and avoids the singularities of $X$. If $C$ is not contained in $\sing{Q}$ then $\Ec|_C$ is not stable on $C$.
\end{proposition}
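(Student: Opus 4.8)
The plan is to restrict the destabilizing quotient to $C$ and extract from it a genuine quotient \emph{bundle} of $\Ec|_C$ whose slope witnesses instability. Since $C$ avoids $\sing{\Ec}$ and the singularities of $X$, the restriction $\Ec|_C$ is locally free on the smooth curve $C$ and $\mu(\Ec|_C) = (c_1(\Ec)\cdot\gamma)/\rk{\Ec} = \slgf{\gamma}{\Ec}$. Restriction is right exact, so $\Ec|_C \ra \Qc|_C \ra 0$ stays surjective. Because $C$ is not contained in $\sing{\Qc}$, the set $C \cap \sing{\Qc}$ is finite and $\Qc|_C$ has generic rank $r \dfn \rk{\Qc}$; dividing by its torsion subsheaf yields a locally free quotient $\bar{\Qc}$ of $\Ec|_C$, nonzero of rank $r$, the intermediate rank. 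First I would record that $\bar{\Qc}$ really is a quotient of $\Ec|_C$ of intermediate rank, so its slope would have to strictly exceed $\mu(\Ec|_C)$ were $\Ec|_C$ stable.

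The crux is the degree comparison $\dgr{\bar{\Qc}} \le c_1(\Qc)\cdot\gamma$, which is precisely the place where the warning that $c_1(\Qc|_C)$ may differ from $c_1(\Qc)|_C$ must be controlled. To establish it I would pass to determinants. Working in a smooth neighborhood of $C$ (available since $C$ avoids $\mathrm{Sing}(X)$), the reflexive hull $\det{\Qc} \dfn (\wedge^r \Qc)^{\vee\vee}$ is a line bundle with $c_1(\Qc)\cdot\gamma = \dgr{(\det{\Qc})|_C}$. Taking $r$-th exterior powers of $\Qc|_C \ra \bar{\Qc}$ and using that exterior powers commute with restriction gives a surjection of rank-one sheaves $(\wedge^r \Qc)|_C \ra \det{\bar{\Qc}}$; modding out the torsion of the source produces a surjection of line bundles on $C$, which is automatically an isomorphism, identifying $\det{\bar{\Qc}}$ with the torsion-free quotient $\bar{G}$ of $(\wedge^r \Qc)|_C$. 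On the other hand, the canonical map $\wedge^r \Qc \ra \det{\Qc}$ is an isomorphism away from $\sing{\Qc}$, so it restricts and descends to a map $\bar{G} \ra (\det{\Qc})|_C$ of line bundles that is an isomorphism on the cofinite set $C \setminus \sing{\Qc}$; a nonzero map of line bundles on the integral curve $C$ is injective, whence $\dgr{\bar{G}} \le \dgr{(\det{\Qc})|_C}$. Chaining the two steps gives $\dgr{\bar{\Qc}} = \dgr{\bar{G}} \le c_1(\Qc)\cdot\gamma$.

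Finally I would assemble the slopes. Since $\Ec \ra \Qc$ destabilizes $\Ec$ with respect to $\gamma$ we have $\slgf{\gamma}{\Qc} \le \slgf{\gamma}{\Ec}$, and therefore
\[
\mu(\bar{\Qc}) = \frac{\dgr{\bar{\Qc}}}{r} \le \frac{c_1(\Qc)\cdot\gamma}{r} = \slgf{\gamma}{\Qc} \le \slgf{\gamma}{\Ec} = \mu(\Ec|_C).
\]
Thus $\bar{\Qc}$ is a torsion-free quotient of $\Ec|_C$ of intermediate rank whose slope does not exceed that of $\Ec|_C$, so $\Ec|_C$ is not stable on $C$. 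The main obstacle is the middle paragraph: controlling how $c_1$ degenerates as $C$ passes through the non-locally-free locus of $\Qc$, where torsion appears and can only depress the degree of the restricted quotient. The determinant and exterior-power bookkeeping is exactly what forces the resulting inequality to point in the favorable direction.
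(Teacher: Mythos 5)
Your proof is correct, but it takes a genuinely different route from the one in the paper. The paper first reduces to the case of a surface: it picks a generic high-degree complete-intersection surface $S$ through $C$, normal and smooth along $C$, meeting $\sing{\Ec}$ and $\sing{\Qc}$ in dimension zero, so that first Chern classes commute with restriction to $S$ and the destabilizing quotient survives as a destabilizing quotient on $S$; the point of this reduction is to make $C$ a Cartier divisor. Then $\Oc_C$ admits a two-term locally free resolution, and the derived-pullback formula $c_1(\Qc)|_C = \sum_{i}(-1)^i c_1\big(\mathrm{Tor}_i^{\Oc_X}(\Qc,\Oc_C)\big)$ collapses to the exact equality $c_1(\Qc)|_C = c_1(\Qc|_C)$, because torsion-freeness of $\Qc$ forces $\mathrm{Tor}_1^{\Oc_X}(\Qc,\Oc_C)=0$. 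You avoid both the dimension reduction and the homological algebra: working in a smooth neighborhood of $C$ (this is where you, too, use torsion-freeness of $\Qc$, to identify $c_1(\Qc)$ with the class of the line bundle $(\wedge^r\Qc)^{\vee\vee}$), you compare $\det\bar{\Qc}$ with $(\det\Qc)|_C$ via the exterior-power surjection, torsion-killing, and the injectivity of any nonzero map of line bundles on an integral curve, obtaining only the inequality $\dgr{\bar{\Qc}} \le c_1(\Qc)\cdot\gamma$ rather than an equality --- but the inequality points in the favorable direction and is all the conclusion requires. What each approach buys: the paper's Tor computation yields the sharper fact that $c_1$ commutes on the nose with restriction to $C$, but it needs $C$ to be a divisor and hence the auxiliary surface; your determinant bookkeeping is more elementary, works directly in any dimension, and isolates the correct mechanism (torsion created along $C\cap\sing{\Qc}$ can only depress the degree of the locally free quotient), at the cost of proving only an inequality.
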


\begin{proof} First, we can reduce to the surface case by choosing a normal surface $S \subset X$ containing $C$ such that $S$ is smooth along $C$, $S$ meets $\sing{\Qc}$ properly, and $S$ meets $\sing{\Ec}$ properly. This is possible because when the dimension of $X$ is greater than 3 a generic, high-degree hyperplane section containing $C$ is normal, smooth along $C$, and meets both $\sing{\Qc}$ and $\sing{\Ec}$ properly. Once such a surface is chosen

\begin{center}
$c_1(\Qc)|_S = c_1(\Qc|_S)= c_1(\Qc|_S / \mathrm{Tors}(\Qc|_S))$

$c_1(\Ec)|_S = c_1(\Ec|_S)= c_1(\Ec|_S / \mathrm{Tors}(\Ec|_S))$
\end{center}

\noindent because both $\sing{\Qc}\cap S$ and $\sing{\Ec}\cap S$ are zero-dimensional. Thus
\begin{center}
$\Ec|_S / \mathrm{Tors}(\Ec|_S) \ra \Qc|_S/\mathrm{Tors}(\Qc|_S) \ra 0$
\end{center}
is a torsion-free quotient on S which destabilizes $\Ec|_S / \mathrm{Tors}(\Ec|_S)$ with respect to the class of $C$. So we have reduced the Proposition to the case $X$ is a surface.

Let $X$ be a surface. It is enough to show $c_1(\Qc|_C) = c_1(\Qc)|_C$. The restriction $c_1(\Qc)|_C$ is computed via the derived pullback:

\begin{center}
$\displaystyle c_1(\Qc)|_C = \overset{\infty}{\underset{i = 0}\sum} (-1)^i c_1({\mathrm{Tor}_i}^{\Oc_X}(\Qc,\Oc_C))$,
\end{center}

\noindent where the ${\mathrm{Tor}_i}^{\Oc_X}(\Qc,\Oc_C)$ are thought of as modules on $C$. Further, $C$ is a Cartier divisor on $X$, so $\Oc_C$ has a two term locally free resolution. So the $\mathrm{Tor}_i^{\Oc_{X}}(\Qc,\Oc_C)$ vanish for $i>2$ and $\mathrm{Tor}_1^{\Oc_{X}}(\Qc,\Oc_C) = 0$ because $\Qc$ is torsion-free. Therefore

\begin{center}
$c_1(\Qc)|_C = c_1({\mathrm{Tor}_0}^{\Oc_X}(\Qc,\Oc_C)) = c_1(\Qc|_C)$.
\end{center}

\noindent So $\Ec|_C$ is not slope stable.
\end{proof}

An immediate corollary is the following coarse criterion for checking slope stability with respect to $\gamma$.


\begin{corollary}\label{3.6} Let $\pi: C_T \ra T$ be a family of smooth irreducible closed curves in $X$ with class $\gamma$. For $t \in T$ we write $C_t$ to denote $\pi^{-1}(t)$. Suppose $\Ec$ is a vector bundle on $X$ such that $\Ec|_{C_t}$ is stable for all $t \in T$. If the curves in $C_T$ are dense in $X$ then $\Ec$ is stable with respect to the curve class $\gamma$.
\end{corollary}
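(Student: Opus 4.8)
The plan is to argue by contradiction, using \hr{3.5}{Proposition 3.5} to convert a hypothetical destabilizing quotient into an unstable restriction $\Ec|_{C_t}$. Suppose $\Ec$ were not stable with respect to $\gamma$. Then by definition there is a torsion-free quotient $\Ec \ra \Qc \ra 0$ of intermediate rank with $\slgf{\gamma}{\Qc} \le \slgf{\gamma}{\Ec}$, which is precisely a destabilizing quotient in the sense of \hr{3.5}{Proposition 3.5}. Since $\Qc$ is torsion-free on the normal variety $X$, its non-locally-free locus $\sing{\Qc}$ has codimension at least $2$, and likewise $\sing{X}$ has codimension at least $2$ by normality; finally $\sing{\Ec} = \emptyset$ because $\Ec$ is a vector bundle.

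The key step is to produce a single member $C_{t}$ of the family meeting all the hypotheses of \hr{3.5}{Proposition 3.5} relative to this quotient: a smooth irreducible curve representing $\gamma$ which avoids $\sing{X}$, avoids $\sing{\Ec}$ (automatic), and is not contained in $\sing{\Qc}$. The union $\bigcup_{t} C_t$ is dense in $X$ by hypothesis, while $\sing{\Qc}$ is a proper closed subset; hence the curves entirely contained in $\sing{\Qc}$ cannot sweep out a dense set, so the condition $C_t \not\subset \sing{\Qc}$ holds for general $t$. To also arrange $C_t \cap \sing{X} = \emptyset$, I would pass to the total space of the family with its evaluation map $\mathrm{ev}: C_T \ra X$: since $\mathrm{ev}$ is dominant and $\sing{X}$ has codimension at least $2$, a fiber-dimension estimate shows that $\pi(\mathrm{ev}^{-1}(\sing{X})) \subset T$, the locus of parameters whose curve meets $\sing{X}$, has positive codimension, so a general $t$ avoids $\sing{X}$ as well. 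A general member therefore satisfies all requirements at once. (In the intended application $X = \hns{n}{S}$ is smooth, so this last point is vacuous and only $C_t \not\subset \sing{\Qc}$ is needed.)

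For such a $C_t$, \hr{3.5}{Proposition 3.5} applies and shows that $\Ec|_{C_t}$ is not stable on $C_t$, contradicting the hypothesis that $\Ec|_{C_t}$ is stable for every $t \in T$. Hence no destabilizing quotient can exist, and $\Ec$ is stable with respect to the curve class $\gamma$.

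I expect the dimension count showing that a general member avoids $\sing{X}$ to be the only genuinely delicate point, since in the non-smooth case a dense family can a priori force every curve through a fixed locus (for instance, lines through a point in $\pt$); the estimate on $\mathrm{ev}^{-1}(\sing{X})$ is what excludes this, and it rests on the codimension-$2$ bound for the singular locus of a normal variety. Everything else is a direct invocation of \hr{3.5}{Proposition 3.5} together with the density hypothesis.
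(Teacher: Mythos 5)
Your main line of argument is exactly the paper's proof: assume instability, take a destabilizing torsion-free quotient $\Ec \ra \Qc \ra 0$, observe that $\sing{\Qc}$ has codimension at least $2$, use density of the family to find a $t$ with $C_t \not\subset \sing{\Qc}$, and then apply \hr{3.5}{Proposition 3.5} to contradict the stability of $\Ec|_{C_t}$. That part is correct and complete, and it is all the paper itself does.

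The supplementary step you add --- arranging that a general $C_t$ avoids $\sing{X}$ --- contains a genuine error, and your own example defeats it. For the family of lines through a point $p$ in $\pt$, the evaluation map $\mathrm{ev}: C_T \ra \pt$ is dominant (even surjective) and $\{p\}$ has codimension $2$, yet $\mathrm{ev}^{-1}(p)$ is a section of $\pi$, hence of dimension $\dim T$, and it surjects onto $T$: \emph{every} curve in the family meets $p$. The fiber-dimension estimate you invoke, namely $\dim \mathrm{ev}^{-1}(Z) \le \dim Z + (\dim C_T - \dim X)$, is valid only when the fibers of $\mathrm{ev}$ over points of $Z$ have the generic fiber dimension, and that is precisely what fails here: the fiber dimension of $\mathrm{ev}$ jumps over $p$. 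So dominance of $\mathrm{ev}$ together with $\mathrm{codim}\,\sing{X} \ge 2$ does not bound the codimension of $\pi(\mathrm{ev}^{-1}(\sing{X}))$; indeed no dimension count of this kind can work, since density of a family of curves is fully compatible with every member passing through a fixed codimension-$2$ locus. To be fair, the paper's proof is silent on this hypothesis of \hr{3.5}{Proposition 3.5} as well --- it never verifies that $C_t$ avoids the singularities of $X$, and implicitly the corollary is only ever applied with $X$ smooth (as in \hr{3.7}{Proposition 3.7}). The honest repairs are either to add the assumption that the curves $C_t$ avoid $\sing{X}$ to the statement, or to restrict to smooth $X$; your dimension count, presented as the resolution of ``the only genuinely delicate point,'' is not a repair.
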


\begin{proof} Suppose for contradiction that $\Ec$ is unstable with respect to $\gamma$. Then there exists a torsion-free quotient $\Ec \ra \Qc \ra 0$ with $\slgf{\gamma}{\Qc} \le \slgf{\gamma}{\Ec}$. As $\Qc$ is torsion-free, $\sing{\Qc}$ has codimension $\ge 2$. The curves in $C_T$ are dense in $X$ so there is a $t\in T$ such that $C_t$ is not contained in $\sing{\Qc}$. Then \hr{1.6}{Proposition 1.6} guarantees that $\Ec|_{C_t}$ is not stable which contradicts our hypothesis.
\end{proof}

\hr{3.5}{Proposition 3.5} can be adjusted so that \hr{3.6}{Corollary 3.6} also holds if stability is replaced by semistability. As a consequence we prove the following basic result about slope stable vector bundles, which we used in the proof of \hr{A}{Theorem A}.


\begin{proposition}\label{3.7} Let $X$ and $Y$ be smooth projective varieties of dimension $d$ and $e$ respectively. Let $H_X$ be an ample divisor on $X$ (resp. $H_Y$ ample on $Y$) and let $p_1$ (resp. $p_2$) denote the projection from $X \times Y$ to $X$ (resp. $Y$). If $\Ec$ is a vector bundle on X which is slope stable with respect to $H_X$ then $p_1^*(\Ec)$ is slope stable on $X \times Y$ with respect to the ample divisor $p_1^*(H_X) + p_2^*(H_Y)$.
\end{proposition}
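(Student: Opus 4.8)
The plan is to reduce slope stability with respect to the divisor $H \dfn p_1^*(H_X) + p_2^*(H_Y)$ to stability with respect to the curve class $\gamma \dfn H^{d+e-1} \in N_1(X \times Y)_{\mathbb{R}}$, and then to exploit the convexity of stability provided by \hr{sum}{Lemma~\ref{sum}}. Writing $A \dfn p_1^*(H_X)$ and $B \dfn p_2^*(H_Y)$, the binomial expansion of $\gamma = (A+B)^{d+e-1}$ has vanishing terms whenever $A$ appears with exponent $>d$ or $B$ appears with exponent $>e$, so only two terms survive:
\[
\gamma = \binom{d+e-1}{d-1}\, A^{d-1}B^{e} + \binom{d+e-1}{d}\, A^{d}B^{e-1}.
\]
Set $\gamma_1 \dfn A^{d-1}B^{e}$ and $\gamma_2 \dfn A^{d}B^{e-1}$; both appear with strictly positive coefficients. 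Geometrically $\gamma_1$ is a positive multiple of the class of a curve $C_1 \times \{y\}$, where $C_1 \subset X$ is a complete intersection of $d-1$ members of $|H_X|$ and $y \in Y$, while $\gamma_2$ is a positive multiple of the class of a curve $\{x\} \times C_2$, where $C_2 \subset Y$ is a complete intersection of $e-1$ members of $|H_Y|$.

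I would first establish stability of $p_1^*(\Ec)$ with respect to $\gamma_1$. Along a curve $C_1 \times \{y\}$ the restriction $p_1^*(\Ec)|_{C_1 \times \{y\}}$ is identified with $\Ec|_{C_1}$. By the Mehta--Ramanathan restriction theorem, the restriction of the stable bundle $\Ec$ to a general complete intersection curve of sufficiently high degree is stable; since slope stability with respect to a curve class depends only on its ray (positive rescaling scales all slopes equally), I may replace $H_X$ by a large multiple and arrange that $\Ec|_{C_1}$ is stable for a general such $C_1$. The family of curves $C_1 \times \{y\}$, as $C_1$ varies over general complete intersections and $y$ over $Y$, is dense in $X \times Y$, so \hr{3.6}{Corollary~\ref{3.6}} gives stability of $p_1^*(\Ec)$ with respect to $\gamma_1$. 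For $\gamma_2$, along a curve $\{x\} \times C_2$ the bundle $p_1^*(\Ec)$ restricts to the trivial bundle $\Ec_x \otimes \Oc_{C_2}$, which is semistable of slope zero; these curves are again dense, so the semistable analogue of \hr{3.6}{Corollary~\ref{3.6}} noted in the preceding remark yields semistability of $p_1^*(\Ec)$ with respect to $\gamma_2$.

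Finally, applying \hr{sum}{Lemma~\ref{sum}} with the stable class $\gamma_1$ and the semistable class $\gamma_2$ shows that $p_1^*(\Ec)$ is stable with respect to every positive combination $a\gamma_1 + b\gamma_2$, in particular with respect to $\gamma = H^{d+e-1}$; since $\slhf{H}{p_1^*(\Ec)} = \slgf{\gamma}{p_1^*(\Ec)}$, this is exactly slope stability with respect to $H$. The only substantive external input is the appeal to Mehta--Ramanathan, which is where stability of $\Ec$ on $X$ is genuinely used; the remaining obstacle is the bookkeeping verifying that the surviving classes $\gamma_1,\gamma_2$ are positive multiples of the geometric curves described and that the associated families are dense, which is routine.
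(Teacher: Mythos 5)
Your proposal is correct and takes essentially the same route as the paper: Mehta--Ramanathan restriction, the dense families of curves $C_1 \times \{y\}$ and $\{x\} \times C_2$ fed into \hr{3.6}{Corollary 3.6} and its semistable variant, the two-term binomial expansion of $(p_1^*H_X + p_2^*H_Y)^{d+e-1}$, and the convexity statement of \hr{sum}{Lemma 3.4}. The only cosmetic difference is that you absorb the $k \gg 0$ required by Mehta--Ramanathan by observing that stability with respect to a curve class depends only on its ray, whereas the paper carries the scaling factors $k^{d-1}$ and $l^{e-1}$ explicitly in its intersection-theoretic identity.
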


\begin{proof} By ~\cite[Theorem 4.3]{MehtaR} if $k \gg 0$ and $C$ is a general curve which is a complete intersection of divisors linearly equivalent to $kH_X$ then $\Ec|_C$ is stable. Let $F \subset |kH_X|^{d-1}$ be the open subset of the cartesian power of the complete linear series of $kH_X$ defined as
\[F \dfn \left\{ (H_1 , ... , H_{d-1}) \in |kH_X|^{d-1} \Big\vert
  \begin{array}{c}
    C = H_1 \cap ... \cap H_{d-1} \text{ is a smooth complete}\\
    \text{intersection curve and }\Ec|_C\text{ is stable}
  \end{array}
\right\} \subset |kH_X|^{d-1}.
\]
We write $C_F$ for the natural family of smooth curves in $X$ parametrized by $F$. Likewise the fiber product $C_F \times_F (F \times Y)$ is naturally a family of smooth curves in $X \times Y$ parametrized by $F \times Y$. The image of $C_F \times_F (F \times Y)$ in $X \times Y$ is dense, and for any $(f,y) \in F \times Y$ the restriction of $p_1^*(\Ec)$ to $C_{(f,y)}$ is stable. Therefore by \hr{1.7}{Corollary 1.7} $p_1^*(\Ec)$ is stable with respect to the numerical class of $C_{(f,y)}$ which we denote by $\gamma$.

For $l\gg0$ the divisor $l H_Y$ is very ample on $Y$ and a general complete intersection of divisors linearly equivalent to $l H_Y$ is smooth. Let $G \subset |lH_Y|^{e-1}$ be the open subset of the cartesian power of the complete linear series of $lH_Y$ defined as
\[G \dfn \left\{ (H_1 , ... , H_{e-1}) \in |lH_Y|^{e-1} \Big\vert
  \begin{array}{c}
    H_1 \cap ... \cap H_{e-1} \text{ is a smooth complete}\\
    \text{intersection curve}
  \end{array}
\right\} \subset |lH_Y|^{e-1}.
\]
As before there is a natural family $D_G$ of smooth curves in $Y$ parametrized by $G$. The fiber product $D_G \times_G (X \times G)$ is a family of smooth curves in $X \times Y$ parametrized by $X \times G$. For $(x,g) \in X \times G$ the restriction of $p_1^*(\Ec)$ to $D_{(x,g)}$ is a direct sum of trivial bundles thus the restriction is semistable. Therefore by applying \hr{1.7}{Corollary 1.7} in the semistable case, $p_1^*(\Ec)$ is semistable with respect to the curve class of $D_{(x,g)}$ which we write $\delta$.

Finally,

\begin{center}
$\displaystyle (p_1^*H_X + p_2^*H_Y)^{d+e-1} = {d+e-1 \choose e}\frac{(H_Y)^e}{k^{d-1}}\cdot\gamma + {d+e-1 \choose d}\frac{(H_X)^d}{l^{e-1}}\cdot\delta$.
\end{center}

\noindent Therefore by \hr{sum}{Lemma 1.5} $p_1^*(\Ec)$ is slope stable with respect to $p_1^*(H_X) + p_2^*(H_Y)$.
\end{proof}

This completes the proof of \hr{A}{Theorem A}. We now sketch a proof of the perturbation argument. The basic idea is an extension of the wall and chamber construction of \cite[Theorem 6.6]{Greb1} to the boundary of the positive cone of curves. This is possible when considering nef divisors which are also lef in the sense of \cite[Definition 2.1.3]{dCM1}.


\begin{proposition}\label{1.9} Let $H$ be a nef divisor and $A$ an ample $\mathbb{Q}$-divisor on $X$ a normal complex projective variety. Suppose $\Ec$ is a rank $r$ torsion-free sheaf on $X$ which is slope stable with respect to the class of $H^{d-1}$. Assume

\begin{center}
$- \cap H^{d-2}:N^1(X)_\mathbb{R} \ra N_1(X)_{\mathbb{R}}$

\hspace{2.5mm} $ \xi \mapsto \xi \cdot H^{d-2}$
\end{center}

\noindent is an isomorphism, then there is a nonempty open set in the ample cone abutting $H$ where $\Ec$ is stable.
\end{proposition}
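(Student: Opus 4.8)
The plan is to exploit the convexity framework built up in Lemmas~\ref{sum} and the corollaries: instead of arguing directly with $H$ and a nearby ample divisor, I will work in the dual space $N_1(X)_\mathbb{R}$ and show that stability holds on a full-dimensional cone of curve classes abutting the class $H^{d-1}$. The hypothesis that the map $\xi \mapsto \xi \cdot H^{d-2}$ is an isomorphism is what lets me transport information back from curve classes to divisor classes. First I would observe that, because $H$ is nef and lef (this is the role of the lef hypothesis flagged in the paragraph preceding the statement), the class $H^{d-1}$ lies on the boundary of the cone of movable/positive curve classes, and $\Ec$ is stable with respect to it by assumption.

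The key geometric input is a wall-and-chamber decomposition of the relevant cone of curve classes, in the spirit of \cite[Theorem 6.6]{Greb1}. For a fixed rank-$r$ torsion-free sheaf $\Ec$, only finitely many numerical first-Chern-class values $c_1(\Fc)$ can arise from saturated destabilizing subsheaves $\Fc$ of bounded rank; each such potential destabilizer carves out a hyperplane (a ``wall'') in $N_1(X)_\mathbb{R}$ along which its slope equals that of $\Ec$. Since $\Ec$ is stable with respect to $H^{d-1}$, the class $H^{d-1}$ lies strictly on the stable side of every one of these finitely many walls, or on the boundary in a controlled way. The heart of the argument is then a continuity/openness statement: stability with respect to a curve class is an open condition once the destabilizing candidates are finite, so there is an open neighborhood of $H^{d-1}$ in the cone of curve classes consisting of classes with respect to which $\Ec$ remains stable. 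I would combine this with Lemma~\ref{sum}: picking $H^{d-1}$ itself (semistability is automatic there) together with any genuinely stable class $\delta$ in that open set, every positive combination $aH^{d-1} + b\delta$ is stable.

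Having produced an open cone of stable curve classes abutting $H^{d-1}$, the final step is to pull this back to the ample cone of divisors. Here I apply the isomorphism hypothesis: the linear map $-\cap H^{d-2}$ sends an open neighborhood of $H$ in $N^1(X)_\mathbb{R}$ homeomorphically onto an open neighborhood of $H^{d-1}$ in $N_1(X)_\mathbb{R}$. For $A$ an ample $\mathbb{Q}$-divisor and small $t>0$, the divisor $H + tA$ is ample (since $H$ is nef and $A$ ample), and its image $(H+tA)\cdot H^{d-2} = H^{d-1} + tA\cdot H^{d-2}$ is a curve class lying in the open stable cone for small $t$. The content is that stability of $\Ec$ with respect to the ample divisor $H+tA$ in the sense of $\mu_{H+tA}$ is governed, to first order near the wall, by the curve class $(H+tA)\cdot H^{d-2}$ rather than by the full top self-intersection $(H+tA)^{d-1}$; the perturbation is small enough that the lower-order terms in $t$ do not push any candidate destabilizer back across a wall.

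I expect the main obstacle to be precisely this last reduction from the \emph{linearized} curve class $(H+tA)\cdot H^{d-2}$ to the genuine slope $\mu_{H+tA}(\Ec)$ defined by $(H+tA)^{d-1}$. The true ample slope involves the full power $(H+tA)^{d-1} = H^{d-1} + t(d-1)A\cdot H^{d-2} + O(t^2)$, so to conclude stability with respect to the \emph{divisor} I must show that the $O(t^2)$ discrepancy between the genuine self-intersection and the linear curve class does not reverse any of the finitely many slope inequalities near the boundary. Since each wall is crossed transversally (the finiteness and the isomorphism hypothesis together ensure the first-order term $A\cdot H^{d-2}$ has nonzero pairing with each wall's normal), shrinking $t$ suppresses the quadratic correction relative to the linear term, which is the delicate quantitative estimate the proof must supply.
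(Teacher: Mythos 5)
There is a genuine gap, and it sits exactly at the load-bearing step of your argument: the claim that only finitely many walls (equivalently, finitely many numerical classes $c_1(\Fc)$ of saturated destabilizing subsheaves) are relevant near $H^{d-1}$. Wall-and-chamber finiteness of this kind is what \cite[Theorem 6.6]{Greb1} proves for \emph{ample} (multi)polarizations, and the underlying boundedness (Grothendieck-type boundedness of saturated subsheaves of slope bounded below) uses ampleness. Here $H$ is only nef --- in the intended application $H_n$ is big and nef but not ample --- so $H^{d-1}$ lies on the boundary of the positive cone and walls can a priori accumulate at it; the paper explicitly frames its proof as an \emph{extension} of the Greb--Toma picture to this boundary, i.e.\ the finiteness you assert is the very difficulty. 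Note also an internal inconsistency: if finiteness did hold, then since stability at $H^{d-1}$ is strict no wall passes through $H^{d-1}$, and mere continuity of $B \mapsto B^{d-1}$ would finish the proof for all ample $B$ near $H$ --- your transversality discussion, the $O(t^2)$ analysis, and even the isomorphism hypothesis would be unnecessary. That the statement needs the isomorphism hypothesis (and that $H$ be an integral divisor, which your argument never uses) signals that accumulation of walls is the real enemy, and your final ``delicate quantitative estimate,'' which you leave unproven, cannot be supplied along the route you describe.

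The paper replaces finiteness by two uniform estimates and then pure convexity. First, because $H$ is a $\zz$-divisor and $\Ec$ is stable with respect to $H^{d-1}$, every intermediate-rank torsion-free quotient $\Qc$ satisfies $\slgf{H^{d-1}}{\Qc}-\slgf{H^{d-1}}{\Ec} \ge \tfrac{1}{r(r-1)}$: a uniform positive gap coming from integrality, valid even if there are infinitely many potential destabilizers. Second, the maximal destabilizing quotient of $\Ec$ with respect to the ample divisor $A$ gives a uniform lower bound $\slgf{A^{d-1}}{\Qc}-\slgf{A^{d-1}}{\Ec} \ge -C$ over all such $\Qc$. Linearity of slopes in the curve class then yields stability with respect to $H^{d-1}+\epsilon A^{d-1}$ for all small $\epsilon>0$, with no boundedness statement needed. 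Finally, the $O(t^2)$ problem you flag is resolved not by wall transversality but by convexity: the classes with respect to which $\Ec$ is semistable form a closed convex cone, the stability along $H^{d-1}+\epsilon A^{d-1}$ shows that nef tangent directions at $H$ are carried by the derivative $(d-1)H^{d-2}$ of the power map into that cone, and since this derivative is an isomorphism by hypothesis it is an open map, so ample directions land in the \emph{interior} of the semistable cone; any interior class is a sum of a semistable class and a small positive multiple of the stable class $H^{d-1}$, hence is stable by \hr{sum}{Lemma 3.4}. This is how the paper converts first-order information into stability for the genuine classes $(H+tA)^{d-1}$, and it is the piece your proposal is missing.
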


This implies we can perturb our Chow polarization in the case of Hilbert schemes of points on surfaces and Chow divisors.


\begin{corollary}
If $\Ec$ is a vector bundle on $S$ a smooth projective surface which is stable with respect to $H$ an ample divisor, then $\enl{n}{\Ec}$ is stable with respect to an ample divisor near the Chow divisor $H_n$.
\end{corollary}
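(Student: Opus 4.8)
The plan is to deduce the statement directly from \hr{1.9}{Proposition~\ref{1.9}} applied to $X = \hns{n}{S}$, which has dimension $d = 2n$, with the nef divisor $H = H_n$ and the sheaf $\enl{n}{\Ec}$. Two hypotheses must be checked. First, $\enl{n}{\Ec}$ must be slope stable with respect to the curve class $H_n^{2n-1}$; this is exactly the content of \hr{Agen}{Theorem~\ref{Agen}} (which applies since $\Ec \not\cong \Oc_S$), once one recalls that stability with respect to the divisor $H_n$ coincides with stability with respect to the class $H_n^{d-1} = H_n^{2n-1}$. Second, one must verify that the map $\xi \mapsto \xi \cdot H_n^{2n-2}$ is an isomorphism $N^1(\hns{n}{S})_\mathbb{R} \to N_1(\hns{n}{S})_\mathbb{R}$; this is the only real work.

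For the isomorphism I would exploit that $H_n$ is \emph{lef}. Indeed, by construction $H_n = h_n^* A$, where $A$ is the ample divisor on $\sym{n}{S}$ produced by Fogarty and $h_n$ is the semismall Hilbert--Chow morphism; this is precisely the situation of \cite[Definition 2.1.3]{dCM1}. For lef classes the Hard Lefschetz theorem and the Hodge--Riemann bilinear relations hold, so the Lefschetz form $Q(\xi,\eta) \dfn \int_{\hns{n}{S}} \xi \cup \eta \cup H_n^{2n-2}$ has signature $(1,\, h^{1,1}-1)$ on the space of real $(1,1)$-classes: it is positive on $[H_n]$ and negative definite on the primitive part.

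It then remains a point of linear algebra to pass from this to the desired isomorphism. Since $N^1(\hns{n}{S})_\mathbb{R}$ is a subspace of the real $(1,1)$-classes containing $[H_n]$, and $Q([H_n],[H_n]) = (H_n)^{2n} > 0$ because $H_n$ is big and nef, the restriction of the Lorentzian form $Q$ to $N^1$ is again nondegenerate: the $Q$-orthogonal complement of the positive vector $[H_n]$ inside $N^1$ lies in the negative definite part, so $Q|_{N^1}$ has signature $(1,\, \dim N^1 - 1)$. Nondegeneracy of $Q|_{N^1}$ is, under the perfect pairing $N^1 \times N_1 \to \mathbb{R}$, exactly the injectivity of $\xi \mapsto \xi \cdot H_n^{2n-2}$; as $\dim N^1 = \dim N_1$ this map is then an isomorphism. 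With both hypotheses in hand, \hr{1.9}{Proposition~\ref{1.9}} produces a nonempty open subset of the ample cone abutting $H_n$ on which $\enl{n}{\Ec}$ is stable, and any ample divisor in this set is the required polarization near $H_n$.

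The main obstacle is exactly this middle step: transferring the cohomological Hard Lefschetz statement, which is what the lef hypothesis supplies, into the numerical statement about $N^1 \to N_1$ that \hr{1.9}{Proposition~\ref{1.9}} requires. The naive worry is that a nondegenerate form on all of the $(1,1)$-classes can become degenerate when restricted to the subspace $N^1$ --- and for a general big and nef (non-lef) divisor this genuinely fails. What rescues the argument is the \emph{signature} information coming from Hodge--Riemann together with the presence of the positive class $[H_n]$ inside $N^1$; this is the only place where semismallness of the Hilbert--Chow morphism is essential.
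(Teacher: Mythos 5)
Your proposal is correct and follows the same route as the paper: the paper's proof likewise cites \cite[Theorem 2.3.1]{dCM1} to conclude that $H_n$ is lef and then invokes Proposition~\ref{1.9} directly, with Theorem~\ref{Agen} supplying the stability hypothesis. The only difference is one of detail: the paper asserts without elaboration that lef-ness yields the hypotheses of Proposition~\ref{1.9}, whereas you correctly spell out the intermediate linear algebra --- that the Hodge--Riemann signature for the lef class $H_n$, together with $H_n^{2n} > 0$ and the presence of $[H_n]$ in $N^1$, forces nondegeneracy of the Lefschetz form on $N^1$ and hence the isomorphism $N^1 \to N_1$ --- which is exactly the step the paper leaves implicit.
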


\begin{proof}[Proof of Corollary.]
By \cite[Theorem 2.3.1]{dCM1} we know $H_n$ is lef, so $\enl{n}{\Ec}$ and $H_n$ satisfy the conditions of Proposition 3.8. Therefore $\enl{n}{\Ec}$ is stable with respect to ample divisors close to $H_n$.
\end{proof}

\begin{proof}[Sketch of Proof of Proposition 3.8]
For $A$ any ample $\qq$-divisor on $X$ we have a maximally slope-destabilizing quotient $\Ec \ra \Qc_A$. Thus we can bound the negativity of $\slgf{A^{n-1}}{\Qc}-\slgf{A^{n-1}}{\Ec}$ for every torsion-free quotient $\Ec \ra \Qc$ of intermediate rank. On the other hand, we can give the bound:

\begin{center}
$\slgf{H^{n-1}}{\Qc}-\slgf{H^{n-1}}{\Ec} \ge \frac{1}{r(r-1)}$
\end{center}

\noindent because $H$ is a $\zz$-divisor. Combining these bounds and by linearity of slopes in curve classes we see that for small $\epsilon >0$, $\Ec$ is stable with respect to the curve class $H^{n-1}+\epsilon A^{n-1}$.

Here we have two closed convex cones, the set of nef $\mathbb{R}$-divisors and the set of curve classes $\gamma \in N_1(X)_{\mathbb{R}}$ where $\Ec$ is semistable with respect to $\gamma$, which we call the \textit{semistable cone}. By considering all ample $\qq$-divisors of the type $tH + (1-t)A$ for rational $t \in (0,1\rbrack$ and taking the limit as $t$ goes to $0$, one can show the derivative of the $(n-1)$st power map sends the tangent vectors pointing into the nef cone to the tangent vectors pointing into the semistable cone.

The derivative at $H$ of the ($n-1$)st power map from $N^1(X)_{\mathbb{R}}$ to $N_1(X)_{\mathbb{R}}$ is $(n-1)H^{n-2}$, which by assumption is nondegenerate. Therefore, it maps tangent vectors pointing into the interior of the ample cone to tangent vectors pointing into the interior of the semistable cone. As $\Ec$ is stable with respect to $H^{n-1}$, by Lemma 3.4 it is stable with respect to any class on the interior of the semistable cone. Therefore, there is a nonempty open set in the ample cone abutting $H$ where $\Ec$ is stable.
\end{proof}

\end{document}